\newcommand{\rank}{\mathrm{rank}}
\newcommand{\Jac}{\mathcal{J}}
\newcommand{\Sing}{\mathfrak{S}}
\newcommand{\PDE}{\sc{pde}}
\newcommand{\ODE}{\sc{ode}}
\newcommand{\DAE}{\sc{dae}}
\newcommand{\PDAE}{\sc{pdae}}
\newcommand{\AP}{\sc{ap}}
\newcommand{\randpoint}{\ensuremath{\mathfrak{a}}}
\newcommand{\R}{\mathbb{R}}
\newcommand{\ie}{{i}.{e}.}
\newcommand{\eg}{{e}.{g}.\hspace{1pt}}
\newcommand{\etc}{{etc}.}
\newcommand{\LD}{\textsc{ld}}
\def\ctotDer{\textbf{D}}
\newtheorem{example}{Example}[section]
\newtheorem{remark}{Remark}[section]
\newtheorem{prop}{Proposition}[section]
\newtheorem{define}{Definition}[section]
\newtheorem{theorem}{Theorem}[section]
\newtheorem{lemma}{Lemma}[section]
\newenvironment{breakablealgorithm}
{
	\begin{center}
		\refstepcounter{algorithm}
		\hrule height.8pt depth0pt \kern2pt
		\renewcommand{\caption}[2][\relax]{
			{\raggedright\textbf{\ALG@name~\thealgorithm} ##2\par}%
			\ifx\relax##1\relax 
			\addcontentsline{loa}{algorithm}{\protect\numberline{\thealgorithm}##2}%
			\else 
			\addcontentsline{loa}{algorithm}{\protect\numberline{\thealgorithm}##1}%
			\fi
			\kern2pt\hrule\kern2pt
		}
	}{
		\kern2pt\hrule\relax
	\end{center}
}
\begin{document}

\title[Embedding Method by Real Numerical Algebraic Geometry  for Structurally Unamenable Differential-Algebraic Equations]{Embedding Method by Real Numerical Algebraic Geometry  for Structurally Unamenable Differential-Algebraic Equations}


\author[1,2]{\fnm{Wenqiang} \sur{Yang}}\email{yangwenqiang@cigit.ac.cn}

\author*[1]{\fnm{Wenyuan} \sur{Wu}}\email{wuwenyuan@cigit.ac.cn}

\author[3]{\fnm{Greg} \sur{Reid}}\email{reid@uwo.ca}

\affil*[1]{\orgdiv{Chongqing Key Laboratory of Automated Reasoning and Cognition}, \orgname{Chongqing Institute of Green and Intelligent Technology, Chinese Academy of Sciences}, \orgaddress{\street{266 Fangzheng Avenue}, \city{Beibei District}, \postcode{400714}, \state{Chongqing}, \country{China}}}

\affil[2]{\orgdiv{Chongqing School}, \orgname{University of Chinese Academy of Sciences}, \orgaddress{\street{266 Fangzheng Avenue}, \city{Beibei District}, \postcode{400714}, \state{Chongqing}, \country{China}}}

\affil[3]{\orgdiv{Department of Mathematics}, \orgname{University of Western Ontario}, \orgaddress{\street{1151 Richmond Street}, \city{London}, \postcode{N6A5B7}, \state{Ontario}, \country{Canada}}}


\abstract{
Existing structural analysis methods may fail to find all hidden constraints for a system of differential-algebraic equations with parameters if the system is structurally unamenable for certain values of the parameters.

In this paper, for polynomial systems of differential-algebraic equations, numerical methods are given to solve such cases using numerical real algebraic geometry. First, we propose an embedding method that for a given real analytic system constructs an equivalent system with a full-rank Jacobian matrix. Secondly, we introduce a witness point method, which can help to detect degeneration on all components of constraints of such systems. Thirdly, the two methods above lead to a numerical global structural analysis method for structurally unamenable differential-algebraic equations on all components of constraints.
}

\keywords{real algebraic geometry, constant rank, witness point, differential-algebraic equations, structural analysis}

\maketitle
\section{Introduction}\label{sec:intro}
\sloppy{}

Systems of differential-algebraic equations are widely used to model and simulate dynamical systems such as mechanical systems,
electrical circuits, and chemical reaction plants {\cite{Ilchmann15}}. In applications, many models are naturally real and nonlinear, including polynomial systems and analytic systems.
We will often use the abbreviation
{\DAE}\footnote{A system of differential-algebraic equations will be denoted by {\DAE} while {\DAE}s will denote several such systems.}
for a system of differential-algebraic equations.
The name arose since such systems usually contain differential equations with derivatives and algebraic
equations without derivatives.
It was initially falsely believed that any such {\DAE} could be easily
converted by coordinate changes and eliminations to a traditional explicit {\ODE} --- the so-called
underlying {\ODE}\footnote{A system of explicit ordinary differential equations,
in solved form for their highest derivatives, will be denoted by {\ODE}.}.

The distance between a {\DAE} and its explicit {\ODE} can be measured by
a \textbf{differential index} {\cite{Campbell95,Campbell1995}} which is the minimum number of differentiations required to transform the {\DAE} into its corresponding explicit {\ODE} and is used in our paper. See Definition  \ref{define_index} for the definition of differential index. \textbf{Index reduction} is a differentiation operation {\cite{Shampine02}} to convert a {\DAE} into an explicit {\ODE} by reducing the differential index.



One may try to numerically solve a {\DAE} directly without reducing its index {\cite{LIU201593,
Awawdeh09,Pryce98}}
. However, this is only feasible for low index (index $\leq 1$) problems.
In contrast to the above direct approach, indirect and widely used approaches first use
index reduction only by differentiation \cite{Pryce98,Pryce01,Pantelides88,Gear88,Fritzson14} followed
by consistent initial point determination \cite{Pantelides88,Brenan95,Taihei19}. A point satisfying the \textbf{constraints} of a {\DAE} is called a  \textbf{consistent point} if an unique solution of the {\DAE} exists through this point. See Definition  \ref{define_ConsistentPoint} and Equation \ref{eq:cons} for the definition of consistent point and constraints. In this paper, we make contributions to such indirect approaches.

Note that the name {\DAE} misleadingly suggests that a {\DAE} can be partitioned into differential equations and non-differential equations (algebraic equations) where the latter
are regarded as constraints. But constraints may have lower order derivatives as shown in Example \ref{point}.
\begin{example}\label{point}
  Consider the {\DAE}
\begin{equation}
  u' u''  +  u u' + x = 0, ((u')^2 + u^2 + x^2 - 1)((u')^2 + u^2 + x^2 - 4 ) = 0
\end{equation}
where $u$ is a unknown function of $x$. $ u' u''  +  u u' + x = 0$ is a differential equation.
Then $((u')^2 + u^2 + x^2 - 1)((u')^2 + u^2 + x^2 - 4 ) = 0$ is a constraint even though it contains derivatives.
Geometrically there are $2$ \textbf{component of constraints} (spheres of radius $1$ and $2$). See Definition \ref{de:zero_set} for the definition of component of constraints.  
\end{example}

\subsection{Previous Work}\label{ssec:pre-work}


A {\DAE} may have more than one component of constraints.
Finding at least one consistent point on each component of constraints of a {\DAE} is an important problem \cite{Pantelides88}.
Commonly used methods to obtain such a consistent initial point are the approximation method \cite{Leimkuhler91,Shampine02} and the transformation method \cite{Vieira20011,Brown98}. 
After that, the index reduction method can give an equivalent system in the neighborhood of the consistent point. As it described in \cite{Tuomela1998}, any {\DAE} can be regarded as sub-manifolds of jet bundles. In this way, Tuomela \cite{Tuomela1998} gave an index reduction method such that consistent points on all component of constraints for a polynomial {\DAE} can be found through algebraic ideals and Gr\"{o}bner base techniques in jet space \cite{saunders_1989}. But these techniques have extremely high complexity. Moreover, Gr\"{o}bner base is available only for polynomial {\DAE}s instead of the general case of analytic {\DAE}s. 



Hence, there are more studies focusing on index reduction in the neighborhood of a consistent point.
Gear {\cite{{Gear88}} proposed a method based on repeatedly finding algebraic equations could realize the index reduction after sufficiently many differentiations of a {\DAE}. 
However, all of these methods are notoriously hard for large and non-linear systems. Fortunately there are some efficient structural analysis methods, such as Pantelides' bipartite graph method \cite{Pantelides88} and Mattsson-S\"{o}derlind's dummy derivatives method {\cite{Mattsson93,McKenzie17}}, based on bipartite graph preprocessing that can sometimes reduce the differential index.
Pryce {\cite{Pryce01}} gave a more general direct and widely applicable $\Sigma$-method by solving the dual problem of an assignment problem ({\AP}). A {\DAE} is called \textbf{structurally amenable} (S-amenable) {\cite{Nedialkov2022}} if any of above structural analysis methods succeed on it. An important recent contribution by Nedialkov, Pryce and Sholz \cite{Nedialkov2022} shows how a wide class of electrical circuit systems yield index $\leq 1$ S-amenable systems.

 Despite the success of structural analysis, the methods may fail  when its  \textbf{Jacobian} matrix after differentiation is singular. Such {\DAE}s are called \textbf{structurally unamenable} (S-unamenable) {\DAE}s.
 It is essential to develop improved structural analysis methods to transform an S-unamenable {\DAE} into an S-amenable {\DAE}\footnote{ The successful transformation by an improved structural analysis method is called \textbf{regularization}.}. See $\bm{\Jac}_{k_c}$ in Equation (\ref{Jac}) for the definition of Jacobian.

  Direct methods such as symbolic elimination {\cite{Campbell93,Piipponen2014}} and substitution method \cite{Taihei19}, can regularize a {\DAE} in theory but they are very complex and inefficient for a nonlinear {\DAE}.  Linear {\DAE} with constant coefficients can be transformed into the canonical form of Weierstra\ss, the Kronecker index determined and then the {\DAE} can be solved directly {\cite{Gerdts11,Kunkel2006}}.

  There are also many indirect methods based on a ``combinatorial relaxation" algorithm 
  {\cite{Murota95}. The LC-method of Tan et al.\ \cite{Tan17} considers equations and their derivatives, with better results for some nonlinear {\DAE}s. The ES-method \cite{Tan17} uses new variables to seek the solution in a projection of a higher dimensional space, and it can be considered as a supplement for the LC-method.
In order to avoid high complexity of eliminations, the augmentation method \cite{Taihei19} adopts a principle similar to the ES-method. However, the transformation of the above methods is feasible only in a neighborhood of a consistent point and is not applicable for some general cases as shown in Section \ref{ssec:problem}.



\subsection{Problem Statement and Contributions}\label{ssec:problem}

In this paper,  S-unamenable {\DAE}s are called ``\textbf{degeneration}" cases, including two types: \textbf{symbolic cancellation} (see Example \ref{ex:2}) and \textbf{numerical degeneration} (see Example \ref{ex:3}).


\begin{example}\label{ex:2} Symbolic Cancellation: {Example $4.10$ in \cite{Tan17}}: 

\[ \left\{ \begin{array}{cclcl} f_1&=&{x}_{1}+t{x}_{2}+t^{2}{x}_{3}+g_1(t)&=& 0\\ f_2&=&{x}'_{1}+t{x}'_{2}+t^2{x}'_{3}+g_2(t)&=&0\\ f_3&=&{x}''_{1}+t{x}''_{2}+2t^2{x}''_{3}+g_3(t)&=& 0 \end{array} \right . \rightarrow \bm{\Jac}_{k_c}=\begin{array}{cc} & \begin{array}{rcl} {x}''_{1}&{x}''_{2}& {x}''_{3} \end{array}\\ \begin{array}{c} f''_1\\f'_2\\f_3\end{array} & \left(\begin{array}{ccc} 1~&t&~t^2 \\1~&t&~t^2 \\1~&t&~2t^2\end{array}\right)\end{array}\]

The determinant of the Jacobian matrix $\bm{\Jac}_{k_c}$ of this {\DAE} after the $\Sigma$-method is identically zero. It implies that this {\DAE} is a typical symbolic cancellation case.
Furthermore, this case can be regularized by a number of existing methods.
\end{example}


Unfortunately, there is little research on failure caused by numerical degeneration. This could happen for a parametric {\DAE} model with a non-zero determinant, where parameters take some specific values, and the determinant equals zero after substituting any initial value on a component of constraints. 

\begin{figure}[htpb]
  \centering
  \includegraphics[height=2cm,width=6cm]{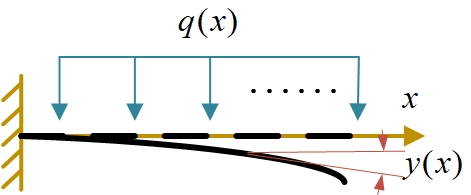}
   \caption{The Superposition Deformation of Beams under Collinear Bending Moments }
  \label{fig:bending}
\end{figure}

\begin{example}\label{ex:3}Numerical Degeneration:

On a beam shown in Figure \ref{fig:bending}, when a load $q(x)$ distributed in the $x$-axis is applied, a bending moment $M(x)=q(x)\cdot x^2/2$ is generated at any point $x$.
As it is described in {\cite{Rans}}, the bending deformation $y(x)$ of the beam satisfies $EI\cdot{\frac {\rm d^{2}}{{\rm d}x^{2}}}y(x)=M(x)$, where $E$ and $I$ are given elastic modulus and moment of inertia of an area of the beam. When two collinear loads act on the beam at the same time, the deformation is the superposition of the effects of these loads. Usually, sensor can only measure the sum of these two loads $q_1(x)+q_2(x)$. Suppose that the input energy $W(x)=\frac{1}{2}\cdot E\cdot y(x)^2$ of two loads at each point $x$ on the beam is linear in $\lambda$, $\lambda W_1(x)=W_2(x)$, then the deformation variables $y_1(x)$, $y_2(x)$ satisfy:
\[\left\{\begin{array}{rcc}
{\frac {\rm d^{2}}{{\rm d}x^{2}}}y_{1}(x)+{\frac {\rm d^{2}}{{\rm d}x^{2}}}y_{2}(x)-\frac{x^2}{EI}\cdot(q_1(x)+q_2(x))&=&0\\
\lambda\cdot y_{1}(x)^{2}-y_{2}(x)^{2}&=&0
\end{array}\right.\]
\[\rightarrow \bm{\Jac}_{k_c}=\left(\begin{array}{cc} 1 & 1 \\ 2\lambda\cdot y_1 & -2y_2 \end{array}\right)\]

In this example, the determinant of the Jacobian matrix of $(\frac {\rm d^{2}}{{\rm d}x^{2}}y_{1},\frac {\rm d^{2}}{{\rm d}x^{2}}y_{2})^{T}$ is $-2(y_{2}+\lambda y_{1})$. When the parameter $\lambda$ is equal to $1$, the constraint becomes $y_{1}^{2}-y_{2}^{2}=(y_{1}+y_{2})(y_{1}-y_{2})=0$. Physically, this means that the elastic deformation energy of each point on the beam is the same. Obviously, two consistent initial points can be selected from the two different components of the constraints, respectively. If the point is on the component of constraints $y_1-y_2=0$, then the $\Sigma$-method works well. But
otherwise, we always encounter a singular Jacobian matrix, and we call this case numerical degeneration.
\end{example}

Such degeneration is of potential importance in designing control parameters in architecture, aviation and biochemistry \cite{Biegler,Kunkel2018}.
So a key question deserving further study both theoretically and computationally addressed in our paper is how to identify and solve such S-unamenable systems.




To sum up, in this paper, our goal is to find an efficient way to regularize numerical degeneration {\DAE}s over each component of
constraint. To approach it, we propose two essential methods in this paper:
\begin{itemize}
  \item   For a given real analytic system, whether it is symbolic cancellation or numerical degeneration, we give an embedding method to handle it. The embedding method eventually converts an S-unamenable {\DAE} into an equivalent S-amenable {\DAE} without algebraic elimination, or reports failure which implies the system has no solution. 
  \item  We present a global numerical approach based on real algebraic geometry which finds at least one point on each component of constraints of a polynomial {\DAE} and detect its degeneration.
\end{itemize}

\section{Preliminaries}\label{s:pre}
\sloppy{}

In what follows we will use algorithmic aspects of the formal (Jet) theory of differential equations {\cite{Reid01,Seiler2010}}.  Jet theory enables two inter-related views of differential equations to be algorithmically and correctly manipulated.  One view is in terms of the maps as algebraic (non-differential) equations, and the other view is in terms of local solutions of the differential equations.

 Let an independent variable $ t \in \mathbb{I} \subseteq \R$
and suppose dependent variables
$\bm{x},\bm{x}^{(1)},...,\bm{x}^{(\ell)}$ are vectors in $\R^n$, where $\mathbb{I}$ is a closed interval and $\ell$ is a fixed positive integer.
Here we consider maps
$\bm{F}: \mathbb{I}\times \R^{\ell n+n}\rightarrow \R^n$ which are real analytic
in $\bm{x},\bm{x}^{(1)},...,\bm{x}^{(\ell)}$ and $t$.

Our approach exploits certain properties of the solutions sets (zero sets)
of systems of real analytic equations.



\begin{define}[Real zero set, Component]\label{de:zero_set}
The real zero set of a real analytic system $\bm f= \bm 0$ is denoted by $Z_{\R}(\bm f)$. If $Z_{\R}(\bm f) =\bigcup_{i \in \mathcal{I}} C_i$ where each $C_i$ is a connected component of $Z_{\R}(\bm f)$ of the system and is a manifold, we call $C_i$ a \textbf{component of} $Z_{\R}(\bm f)$. If $Z_{\R}(\bm f)$ is the zero set of the constraints of a real analytic {\DAE}, then $C_i$ is called a \textbf{component of the constraints}.
\end{define}

\begin{example}
The constraint of Example \ref{point} can be decomposed as the union of analytic manifolds (connected components) --- spheres of radius $1$ and $2$ in $(x,u,u')$-space. Each of them is a component of $Z_{\R}(\bm f)$.
   
\end{example}

In the case of polynomial systems the index set $\mathcal{I}$ is finite, and can be represented by computable points (witness points) lying on each component.  In the real analytic case $\mathcal{I}$ can be infinite.
Note that here we use components as is standard terminology in algebraic geometry and differential geometry, but differs from terminology used in other areas, where a component is the coordinate of a vector for example.

In particular, we consider systems which are not solved for their highest derivatives, and regard such a system as a
{\DAE}.
The differential-algebraic systems we consider have Jet form
\begin{equation}\label{eq:def_dea}
\bm{F}(t,\bm{x},\bm{x}^{(1)},...,\bm{x}^{(\ell)}) = 0.
\end{equation}

In \cite{Pryce01}, Pryce reinterpreted Pantelides' algorithm \cite{Pantelides88} as the dual problem of an assignment problem (AP) that reveals structural information about {\DAE}s.

Firstly, we can obtain an $n \times n$ matrix
$\bm \sigma=(\sigma_{i,j})_{1 \leq i \leq n,1 \leq j \leq n}$ which is called the \textbf{signature matrix}
 of $\bm{F}$ by Pryce \cite{Pryce01}:
\begin{equation}\label{signature}
    (\sigma_{i,j})(\bm{F}) = \left\{%
\begin{array}{ll}
    \hbox{ the order of $\LD(F_i,x_j)$;} \\
    -\infty, \;\; \hbox{if $x_j$ does not occur in $F_i$.} \\
\end{array}%
\right.
\end{equation}

The \textbf{leading derivative} of an equation or a system $F_i$ with respect to
$x_j$ is denoted by $\LD(F_i,x_j)$ and is $x_j^{(k)}$ occurring in $F_i$ such that some $k\in \mathds{Z^{+}}$ is the highest order of derivative of $x_j$ in $\bm{F}$.

 The \textbf{bipartite graph} of $\bm{F}$ is the undirected graph whose $2n$ vertices are the $n$ row set of equations and $n$ column set of variables, and which has an edge between row $i$ and column $j$ whenever $(i,j)\in (\sigma_{i,j})(\bm{F})$. We say the bipartite graph of $\bm{F}$ has a \textbf{perfect matching} if there exists an edge subset $M$ such that every vertex of the graph is incident to exactly one edge of $M$.

Secondly, the $\Sigma$-method \cite{Pryce01} can be formulated as the dual problem of an
 {\AP} problem with \textbf{optimal solution} $\bm{c} =
(c_1, ..., c_n )$ and $\bm{d} = (d_1, ..., d_n )$ of integer vectors:
\begin{equation}\label{LPP}
\begin{array}{l}
    \hbox{Minimize~~} \delta(\bm{F}) = \sum d_j - \sum c_i,  \\
    \hbox{~~~~where~~} d_j -c_i \geq \sigma_{ij},  \\
    ~~~~~~~~~~~~~~c_i \geq 0. \\
\end{array}%
\end{equation}

Here, $\delta(\bm{F})$ is the \textbf{optimal value} of the Problem (\ref{LPP}).

Let $\ctotDer$ be the formal total derivative operator with respect to independent variable $t$, $\ctotDer = \frac{\partial}{\partial t} +  \sum_{k=0}^{\infty} \bm{x}^{(k+1)} \frac{\partial }{\partial \bm{x}^{(k)}}.$

We specify the differentiation order for $F_i$ to be $c_i$, for $i = 1, \dots, n$. Then the differentiation of $\bm{F}$  up to the order of optimal solution of $\bm{c}$ is
\begin{equation}\label{eq:DefFc}
\bm{F}^{(\bm{c})}= \{F_1, \ctotDer F_1,..., \ctotDer^{c_1}F_1\}\cup \cdots \cup  \{F_n, \ctotDer F_n,..., \ctotDer^{c_n}F_n\} =\ctotDer^{\bm{c}}\bm{F}.
\end{equation}
 The number of equations of \textbf{differentiated system} $\bm{F}^{(\bm{c})}$ is $n + \sum_{i=1}^{n} c_i$.


We note that $\bm{F}^{(\bm{c})}$ has a favorable block
triangular structure enabling us to compute consistent initial values
more efficiently.

Without loss of generality, we
assume $c_1 \geq c_2 \geq \cdots \geq c_n$, and let $k_c = c_1$,
then we can
partition $\bm{F}^{(\bm{c})}$ into $k_c + 1$ parts (see Table $1$),
for $0\leq p \leq k_c$ with $p \in\mathds{Z}$ given by
\begin{equation}\label{eq:B_i}
    \bm{B}_p= \{ F_j^{(p+c_j - k_c )} : 1\leq j \leq n, p+c_j - k_c
    \geq 0 \} .
\end{equation}

\begin{table}\label{table:triB}
\begin{tabular}{|c|c|c|c|c|}
  \hline
  $\bm{B}_0$ & $\bm{B}_1$ & $\cdots$ & $\bm{B}_{k_c-1}$ & $\bm{B}_{k_c}$ \\
  \hline
  $F^{(0)}_1$ & $F^{(1)}_1$  & $\cdots$      & $F^{(c_1-1)}_1$  & $F^{(c_1)}_1$  \\
            & $F^{(0)}_2$  & $\cdots$      & $F^{(c_2-1)}_2$  & $F^{(c_2)}_2$ \\
            &            & $\vdots$      & $\vdots$         & $\vdots$  \\
            &            & $F^{(0)}_n$   &  $\cdots$        & $F^{(c_n)}_n$ \\
  \hline
\end{tabular}
\caption {The triangular block structure of $\bm{F}^{(\bm{c})}$ for the case of $c_{p}=c_{p+1}+1$; For $0\leq p < k_c$, $\bm{B}_i$ has fewer dependent variables than $\bm{B}_{p+1}$.}
\end{table}

Here, the \textbf{top block} of $\bm{F}^{(\bm{c})}$ is $\bm{B}_{k_c}$ and the \textbf{constraints} are
\begin{equation}\label{eq:cons}
\bm{F}^{(\bm{c}-1)} = \{\bm{B}_0,...,\bm{B}_{k_c-1}\} .
\end{equation}

Similarly, let $k_d = \max(d_j)$ and we can partition all the variables into $k_d+1$ parts:
$\bm{X}^{(q)}= \{ x_j^{(q+d_j - k_d )} : 1\leq j \leq n, q+ d_j - k_d  \geq 0 \}$ .

For each $\bm{B}_i, 0\leq i \leq k_c$, we define the \textbf{Jacobian Matrix}
\begin{equation}\label{Jac}
    \bm{\Jac}_{i} =
\left( \frac{\partial \bm{B}_{i}}{\partial \bm{X}^{(i+k_d-k_c)}} \right).
\end{equation}

So $\bm{\Jac}_{k_c}$ is the Jacobian Matrix of the top block in the table, and it is a square matrix.

\begin{prop}\label{prop:fullrank}
Let $ \{\bm{\Jac}_i\}$ be the set of Jacobian
matrices of $\{\bm{B}_i \}$. For any $0 \leq i < j \leq k_c$, $\bm{\Jac}_{i}$
is a sub-matrix of $\bm{\Jac}_{j}$. Moreover, if $\bm{\Jac}_{k_c}$ has full
rank, then any $\bm{\Jac}_{i}$ also has full rank.
\end{prop}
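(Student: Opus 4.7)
The plan is to realise each $\bm{\Jac}_i$ as a submatrix of $\bm{\Jac}_{k_c}$ obtained by deleting specific rows and columns, and then to exploit a zero-block pattern in $\bm{\Jac}_{k_c}$ to deduce full rank. First, by (\ref{eq:B_i}) and (\ref{eq:U_i}), the block $\bm{B}_i$ consists of $F_j^{(i+c_j-k_c)}$ for $j$ in $R_i := \{j : c_j \geq k_c - i\}$, and $\bm{X}^{(i+k_d-k_c)}$ consists of $x_\ell^{(i-k_c+d_\ell)}$ for $\ell$ in $C_i := \{\ell : d_\ell \geq k_c - i\}$. These index sets are monotone in $i$, so once $\bm{\Jac}_i$ is identified as the submatrix of $\bm{\Jac}_{k_c}$ with rows $R_i$ and columns $C_i$, the containment $\bm{\Jac}_i \subseteq \bm{\Jac}_j$ for $i \leq j$ is immediate.

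To match entries, I would use the fact that prolongation preserves the coefficient of the leading derivative: if $F_j$ depends on $x_\ell^{(\sigma_{j\ell})}$ with coefficient $Q := \partial F_j / \partial x_\ell^{(\sigma_{j\ell})}$, then $\ctotDer^a F_j$ contains $x_\ell^{(\sigma_{j\ell}+a)}$ with the same coefficient $Q$, so
\[\frac{\partial F_j^{(a)}}{\partial x_\ell^{(\sigma_{j\ell}+a)}} \;=\; \frac{\partial F_j}{\partial x_\ell^{(\sigma_{j\ell})}}.\]
By the constraint (\ref{dc}), the $(j,\ell)$-entry of $\bm{\Jac}_{k_c}$ is nonzero only in the tight case $\sigma_{j\ell} = d_\ell - c_j$, and in that case it equals $Q$ (take $a = c_j$). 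The same identity with $a = i + c_j - k_c$ gives the same value $Q$ for the $(j,\ell)$-entry of $\bm{\Jac}_i$. Hence $\bm{\Jac}_i$ is literally the claimed submatrix.

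For the full-rank statement, I would exploit a zero-block structure in $\bm{\Jac}_{k_c}$: for $j \in R_i$ and $\ell \notin C_i$ we have $c_j \geq k_c - i > d_\ell$, but any nonzero entry requires $d_\ell - c_j = \sigma_{j\ell} \geq 0$, a contradiction. So the rows of $\bm{\Jac}_{k_c}$ indexed by $R_i$ vanish on every column outside $C_i$. If $\bm{\Jac}_{k_c}$ is nonsingular, those $|R_i|$ rows remain linearly independent when restricted to $C_i$, forcing $|R_i| \leq |C_i|$ and $\bm{\Jac}_i$ to have rank $|R_i|$, which is maximal. The delicate step is the entry matching in the middle paragraph: it requires tracking what the "leading" derivative is after repeated prolongation and noticing that tightness of (\ref{dc}) at every nonzero position is exactly what makes the two coefficient expressions coincide; the identification of $R_i, C_i$ and the concluding rank argument are essentially bookkeeping.
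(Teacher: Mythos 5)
Your proof is correct and takes the standard route for this result: realize each $\bm{\Jac}_i$ as the $R_i\times C_i$ submatrix of $\bm{\Jac}_{k_c}$ via Griewank's lemma ($\partial (\ctotDer^a F_j)/\partial x_\ell^{(\sigma_{j\ell}+a)} = \partial F_j/\partial x_\ell^{(\sigma_{j\ell})}$) together with the tightness observation from~(\ref{dc}), and then deduce full row rank of $\bm{\Jac}_i$ from nonsingularity of $\bm{\Jac}_{k_c}$ via the vanishing of the block in rows $R_i$ and columns outside $C_i$. The paper delegates the proof to~\cite{WRI09}, and your argument matches the one used there and in Pryce's structural-analysis framework.
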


See \cite{WRI09} for the proof.

\begin{define}\label{define_index}
The \textbf{differential index} of an $\ell$ th-order {\DAE}  in Equation (\ref{eq:def_dea}) is the minimum number of differentiations of any equation needs to be done. After this differentiations, the first-order {\DAE} is obtained by replacing higher-order derivatives of $\bm{x}$ by newly introduced variables.
\end{define}

\begin{define}\label{define_ConsistentPoint}
A point satisfying the constraints of a {\DAE} is called a \textbf{consistent point} if a unique solution of the {\DAE} exists through this point.
\end{define}
We have expressed the definition of consistent point to reflect its more general relative, that of locally solvability for general systems of PDE (or {\PDAE}). See Reid et. al \cite{Reid01} for a jet theoretic introduction focused on {\DAE} and {\PDAE}.  \\
Remarkably, differentiation of systems of differential equations to include missing constraints until existence and uniqueness results could be derived, first arose around 1900 in the case of linear systems of {\PDE} in the works of Cartan, Riquier and others \cite{Reid01}.  The finiteness of this process was conjectured by Cartan and eventually proved by Kuranishni \cite{Seiler2010}. The number of required differentiations is related to various differential indices for {\DAE}. Our article is focused on extending structural methods for {\DAE}.  When such methods are successful, they yield a differential index counting the number of required differentiations.  Our focus in this paper is not on differential indices, but rather on extending the applicability of structural methods.

Suppose $(t^*,\bm{X}^*)$ is a consistent point satisfying the constraints $\{\bm{B}_0,...,\bm{B}_{k_c-1}\}$ and
$\bm{\Jac}_{k_c}$ has full rank at this point, then the $\Sigma$-method has successfully finished the index reduction.

In the rest of the paper, we usually suppress the subscript in $\bm{\Jac}_{k_c}$ so it becomes $\bm{\Jac}$ unless the subscript is needed.

\begin{define}[Degree of freedom (DOF) for {\DAE}s]\label{def:DOF}
Let a {\DAE} $\bm{F}$ contains $m$ equations and $n$ dependent variables, DOF of $\bm{F}$ is $n - m$ which determines the existence of the solution.
\end{define}

 The optimal value found by the $\Sigma$-method can be regarded as DOF measure for an S-amenable {\DAE}
. When a {\DAE} is S-unamenable, its true DOF is overestimated by $\delta(\bm{F})$ after the $\Sigma$-method. Roughly speaking, finding hidden constraints is equivalent to minimizing $\delta(\bm{F})$.

Unfortunately, the definition of optimal value $\delta(\bm{F})$ is limited to square systems, and we need to extend the definition for non-square systems  $\bm{F}^{(\bm{c})}$.

\begin{define}\label{define_delta1}
Let a differentiated system $\bm{F}^{(\bm{c})}$ consist of the top block $\bm{B}_{k_c}$ and the remaining equations $\bm{F}^{(\bm{c}-1)}$,  where $\bm{F}^{(\bm{c})}$ contains $p$ equations and $n$  dependent variables with $p\geq n$. Let $\delta(\bm{B}_{k_c})$ be the optimal value of the dual problem of the {\AP} of $\bm{B}_{k_c}$'s signature matrix. We define  $\delta(\bm{F}^{(\bm{c})}) = \delta(\bm{B}_{k_c}) - \#eqns(\bm{F}^{(\bm{c}-1)})$, where $\#eqns(\bm{F}^{(\bm{c}-1)})$ is the number of equations in $\bm{F}^{(\bm{c}-1)}$.  Meanwhile, $\delta(\bm{F}^{(\bm{c})})$  also equals the $DOF$ measure of $\bm{F}^{(\bm{c})}$ if $\bm{B}_{k_c}$ is non-singular.
\end{define}

For example, in the case of $\bm{c}=\bm{0}$, we have $\#eqns(\bm{N})=0$ since  $p=n$, and our definition of $\delta(\bm F)$ is equivalent to the original definition in Equation (\ref{LPP}).

\begin{prop}\label{prop:extend_delta}
Let $(\bm{c},\bm{d})$ be an optimal solution of Problem (\ref{LPP}) for a given  DAE $\bm{F}$. Then $\delta (\bm{F})=\delta (\bm{F}^{(\bm{c})})=\sum d_j - \sum c_i$.

\end{prop}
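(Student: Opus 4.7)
The goal is to verify that the extended definition of $\delta$ applied to the prolonged system $\bm{F}^{(\bm{c})}$ reproduces the original {\ILP} optimum $\sum_j d_j - \sum_i c_i$. My plan is to decompose $\bm{F}^{(\bm{c})}$ according to the block structure of Table 1 and reduce $\delta(\bm{F}^{(\bm{c})})$ to a shifted {\ILP} whose optimum is governed by that of $\bm{F}$.

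First, I would identify the top block of $\bm{F}^{(\bm{c})}$ as $\bm{A} = \bm{B}_{k_c} = \{F_j^{(c_j)} : 1 \leq j \leq n\}$, which consists of $n$ equations in the $n$ dependent variables $x_1,\ldots,x_n$ and hence has a square signature matrix. The algebraic-constraints block $\bm{B} = \bm{B}_0 \cup \cdots \cup \bm{B}_{k_c-1}$ then contains, by inspection of (\ref{eq:B_i}), exactly $\sum_{i=1}^n c_i$ equations. Applying Definition \ref{define_delta1} yields $\delta(\bm{F}^{(\bm{c})}) = \delta(\bm{A}) - \sum_i c_i$, so it suffices to prove $\delta(\bm{A}) = \sum_j d_j$.

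Second, I would write down the {\ILP} for $\bm{A}$. Differentiating $F_i$ exactly $c_i$ times raises the leading order of $x_j$ by $c_i$ (and preserves $-\infty$ entries), so the signature of $\bm{A}$ is $\sigma'_{ij} = c_i + \sigma_{ij}$. Its {\ILP} is: minimize $\sum_j d_j' - \sum_i c_i'$ subject to $d_j' - c_i' \geq c_i + \sigma_{ij}$ and $c_i' \geq 0$. Substituting $\bm{e}_i := c_i + c_i'$ recasts this as: minimize $\sum_j d_j' - \sum_i \bm{e}_i + \sum_i c_i$ subject to $d_j' - \bm{e}_i \geq \sigma_{ij}$ and $\bm{e}_i \geq c_i$. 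The key observation is that the original optimizer $(\bm{c},\bm{d})$ of problem (\ref{LPP}), read as $\bm{e}_i = c_i$ and $d_j' = d_j$, is feasible for this new problem and attains objective $\sum_j d_j - \sum_i c_i + \sum_i c_i = \sum_j d_j$. Since strengthening $c_i' \geq 0$ to $\bm{e}_i \geq c_i$ can only shrink the feasible set, the new minimum of $\sum_j d_j' - \sum_i \bm{e}_i$ is at least the original minimum $\delta(\bm{F})$; the exhibited feasible point attains equality, so the shifted optimum equals $\delta(\bm{F}) + \sum_i c_i$. Therefore $\delta(\bm{A}) = \sum_j d_j$, and combining with the first step gives $\delta(\bm{F}^{(\bm{c})}) = \sum_j d_j - \sum_i c_i = \delta(\bm{F})$, as required.

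The main obstacle I anticipate is the careful combinatorial bookkeeping: confirming that $\bm{B}$ has exactly $\sum_i c_i$ equations from the block-triangular layout, that $\bm{B}_{k_c}$ really qualifies as a square top block in the sense required by Definition \ref{define_delta1}, and that the shift $\sigma'_{ij} = c_i + \sigma_{ij}$ behaves correctly on the $-\infty$ entries (where it remains $-\infty$ and the constraint is vacuous). Remark \ref{rem:dae} is what guarantees the degree-of-freedom interpretation of $\delta(\bm{F})$, so once these accounting points are pinned down the argument reduces to the one-line comparison of the two nested {\ILP}s above.
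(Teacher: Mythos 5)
Your proof is correct and follows essentially the same route as the paper: decompose $\bm{F}^{(\bm{c})}$ into the square top block $\bm{B}_{k_c}$ and the constraint block $\bm{F}^{(\bm{c}-1)}$ of size $\sum c_i$, show $\delta(\bm{B}_{k_c}) = \sum d_j$, and apply Definition \ref{define_delta1}. Your shifted-{\ILP} argument (substituting $\bm{e}_i = c_i + c_i'$ and comparing feasible regions) supplies a clean, explicit justification for the optimality of $(\bm{0},\bm{d})$ for the top block's {\ILP}, a step that the paper's proof asserts but does not spell out.
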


\begin{proof}
For a differentiated {\DAE} system $\bm{F}^{(\bm{c})}=\{\bm{B}_{k_c},\bm{F}^{(\bm{c}-1)}\}$, the signature matrix of the top block $\bm{B}_{k_c}$ is square.

 We construct a pair $(\hat{\bm{c}},\hat{\bm{d})}$, for $i=1,\cdots,n$ and $ j=1,\cdots,n$, $\hat{c}_{i}=0$ and $\hat{d}_{j}= d_{j}$. Since $(\bm{c},\bm{d})$ is the optimal solution for $\bm{F}$, and $\bm{B}_{k_c}$ is the top block of $\bm{F}^{(\bm{c})}$, it follows that $(\hat{\bm{c}},\hat{\bm{d})}$ is the optimal solution of $\bm{B}_{k_c}$, $\delta(\bm{B}_{k_c})= \sum d_j$.

 By Definition \ref{define_delta1},  with $\#eqns(\bm{F}^{(\bm{c}-1)})=\sum c_i$, we can obtain $$\delta (\bm{F}^{(\bm{c})})= \delta(\bm{B}_{k_c}) -\#eqns(\bm{F}^{(\bm{c}-1)})=\sum d_j - \sum c_i =\delta (\bm{F}) \,.$$
\end{proof}
\section{Detecting Degeneration by Points}\label{sec:det}



There are two types of methods to detect a nonzero function vanishing on a component of $Z_{\R}(\bm f)$: the symbolic method and the numerical method.
The symbolic methods, {\eg}\ by using Gr\"{o}bner Bases \cite{Geddes1992} or Triangular Decomposition \cite{Collins1982}, have high complexity and are only feasible for polynomial cases. The numerical method, {\eg}\ by using Newton iterative solvers, can find some points on the components of $Z_{\R}(\bm f)$. But using these points to detect the vanishing of the function is incorrect for smooth functions. Therefore, we need to introduce an efficient numerical method to detect the vanishing of the function.
To build a solid foundation of our theory, we need some results from the theory of real analytic functions of several variables \cite{KrantzParks02}.

\begin{define}
A function $f$, with domain an open subset $U \subset \R^n$ and range
$\R$, is called real analytic on $U$, if for each $\bm p \in U$ the function
$f$ can be represented by a convergent power series in some neighborhood of $\bm p$.
\end{define}

\begin{prop}[Proposition 2.2.8 of \cite{KrantzParks02}]\label{prop:composition}
If $f_1, ..., f_m$ are real analytic in some neighborhood of
the point $\bm p\in \R^n$  and $g$ is real analytic in some neighbourhood of the point
$(f_1(\bm p),  ... , f_m(\bm p)) \in \R^m$, then the composition of functions $g(f_1(\bm x), ...,f_m(\bm x))$ is real analytic
in a neighborhood of $\bm p$.
\end{prop}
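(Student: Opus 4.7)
The plan is to work directly with power series and establish convergence of the composed series by the method of majorants. After translating in both source and target I may assume $p = 0$ and $f_i(0) = 0$ for every $i$. Write $g(y) = \sum_\beta b_\beta\, y^\beta$ convergent for $|y_j| < r$, and $f_i(x) = \sum_{|\alpha| \geq 1} a_{i,\alpha}\, x^\alpha$ convergent for $|x_k| < s$, with $\beta \in \mathbb{N}^m$ and $\alpha \in \mathbb{N}^n$. By continuity of each $f_i$ at the origin, shrink $s$ so that $\sum_i |f_i(x)| < r$ holds throughout the resulting polydisk.

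Next, I would substitute $y_i = f_i(x)$ formally into the outer series, expand each product $f_1(x)^{\beta_1}\cdots f_m(x)^{\beta_m}$ as a power series in $x$, and collect by $\alpha$ to obtain a formal series $\sum_\alpha c_\alpha\, x^\alpha$. The claim is that this series converges absolutely on a small polydisk and equals $g(f_1(x),\ldots,f_m(x))$ there. To prove it, Cauchy estimates give $|b_\beta| \leq M r^{-|\beta|}$ for some $M$, and the nonnegative absolute series $\varphi_i(t) := \sum_\alpha |a_{i,\alpha}|\, t^{\alpha}$ is analytic in a neighbourhood of $0$ with $\varphi_i(0) = 0$. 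For $|x_k|$ sufficiently small,
\[
\sum_\beta |b_\beta|\, \varphi_1(|x|)^{\beta_1} \cdots \varphi_m(|x|)^{\beta_m} \;\leq\; M \sum_\beta r^{-|\beta|} \Bigl( \textstyle\sum_i \varphi_i(|x|) \Bigr)^{|\beta|},
\]
which is a convergent geometric-type series because $\sum_i \varphi_i(|x|) < r$ on a small polydisk. This majorant dominates every rearrangement of the double sum in $(\beta, \alpha)$, so absolute convergence and the identification with the composition follow together.

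The principal obstacle is not any single estimate but the bookkeeping required to justify rearrangement of a doubly-indexed family of coefficients: one must check that the expansions of the $f_i^{\beta_i}$, multiplied together and summed over $\beta$, form a summable family indexed by pairs $(\beta, \alpha)$. Once the majorant bound above is in place, Fubini/Tonelli for nonnegative series legitimises the grouping by $\alpha$ and identifies $\sum_\alpha c_\alpha x^\alpha$ with $g(f_1(x),\ldots,f_m(x))$ pointwise on a neighbourhood of $p$, establishing real analyticity of the composition there.
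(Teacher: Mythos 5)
The paper does not prove this proposition at all; it cites Proposition 2.2.8 of Krantz--Parks as a black box, so there is no internal argument to compare against. Your majorant proof is the standard and correct argument for this classical fact: after normalizing $p=0$, $f_i(0)=0$, the Cauchy-type bound $|b_\beta|\le M\rho^{-|\beta|}$ (for any $\rho$ slightly less than the radius of convergence of $g$) together with the nonnegative majorants $\varphi_i(|x|)=\sum_\alpha |a_{i,\alpha}|\,|x|^\alpha$ makes the doubly-indexed family summable, and Tonelli then licenses the regrouping into a single convergent series in $x$ identical to $g(f_1,\dots,f_m)$. Two small tidinesses worth noting: the initial shrinking to ensure $\sum_i |f_i(x)|<r$ is redundant, since what the estimate actually requires is the stronger condition $\sum_i \varphi_i(|x|)<\rho$, which you correctly invoke later (continuity of the $\varphi_i$ with $\varphi_i(0)=0$ gives this on a small polydisk); and the Cauchy bound should be stated with a radius strictly inside the polydisk of convergence of $g$ rather than with $r$ itself, which costs nothing since one may shrink $r$ at the outset.
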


\begin{theorem}[Real Analytic Implicit Function Theorem \cite{KrantzParks02}]\label{thm:IFT}
Given a set of equations
$f_i(x_1,...,x_m;y_1,...,y_n) =0, \;\; i=1,2,...,n$, where each $f_i$ is real analytic, suppose that
$(\bm p,\bm q)= (p_1,...,p_m; q_1,...,q_n)$  is a solution with nonsingular Jacobian
$\frac{\partial \bm f}{\partial \bm y} (\bm p,\bm q)$.

Then there exists a neighborhood $U \subset \R^m$ of $\bm p$ and a set of real analytic functions $ \phi_j : U \rightarrow  \R, j = 1, 2, ... , n$, such that $\phi_j(\bm p) = q_j, j = 1,2,...,n$, and
$$f_i(\bm x; \phi_1(\bm x),...,\phi_n(\bm x)) =0, i = l,2,...,n, $$
hold for $\bm x \in U$.
\end{theorem}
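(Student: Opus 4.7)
The plan is to reduce the real analytic case to the classical holomorphic implicit function theorem by complexification, and then descend back to the reals by a uniqueness argument. The strategy is standard, but the convergence argument hidden in the holomorphic step is where the real work lies.

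First I would observe that each $f_i$, being real analytic at $(p,q)$, has a power series that converges absolutely on some polydisc and therefore extends to a holomorphic function $\tilde{f}_i$ on a polydisc in $\C^m\times\C^n$ centred at $(p,q)$. The real Jacobian $\partial f/\partial y$ at $(p,q)$ agrees entry-wise with the complex Jacobian $\partial \tilde{f}/\partial y$, so nonsingularity is preserved. Applying the holomorphic implicit function theorem to $\tilde{f}$ then yields a polydisc neighbourhood $\tilde U\subset\C^m$ of $p$ and unique holomorphic functions $\Phi_j:\tilde U\to\C$ with $\Phi_j(p)=q_j$ and $\tilde f_i(x;\Phi_1(x),\ldots,\Phi_n(x))=0$.

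The \emph{main technical obstacle} is proving this holomorphic statement. My preferred route is Cauchy's method of majorants: write each $\Phi_j$ as a formal power series in $x-p$ with undetermined coefficients; substitute into $\tilde f_i=0$ and use invertibility of $\partial\tilde f/\partial y$ at $(p,q)$ to solve recursively for all coefficients; then dominate them by the coefficients of an explicit majorant problem (built from a geometric series bounding the Taylor coefficients of the $\tilde f_i$) whose implicit solution is available in closed form and manifestly convergent on a small polydisc. An alternative is a Banach fixed-point argument on a space of bounded holomorphic maps on a small polydisc, using the contraction obtained by inverting the linear part with $(\partial\tilde f/\partial y)^{-1}(p,q)$. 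Either way, convergence on a sufficiently small polydisc is the heart of the matter and is what ultimately makes the theorem stronger than its merely smooth counterpart.

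Finally I would descend back to $\R$. Set $U:=\tilde U\cap\R^m$ and $\phi_j:=\Phi_j|_U$. For real $x\in U$, complex conjugation of $\tilde f_i(x;\Phi(x))=0$ together with $\tilde f_i$ being real on real arguments shows that $\overline{\Phi(x)}$ also solves the same holomorphic implicit problem and agrees with $\Phi$ at $x=p$; by the uniqueness part of the holomorphic theorem, $\Phi_j(x)=\overline{\Phi_j(x)}$, so $\phi_j$ is real-valued. Each $\phi_j$ is then real analytic on $U$ as the restriction of a holomorphic function to a real slice, and Proposition 2.2.8 of the excerpt ensures that the composite $f_i(x;\phi(x))$ is again real analytic, so the identity $f_i(x;\phi_1(x),\ldots,\phi_n(x))=0$ is meaningful in the real analytic category and holds throughout $U$.
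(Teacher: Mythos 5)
The paper does not give a proof of this theorem: it is quoted verbatim from Krantz--Parks and used as a black box, so there is no ``paper's own proof'' against which to compare. What you propose is a correct and standard route --- complexify to holomorphic germs, invoke the holomorphic implicit function theorem (proved by majorants or a Banach fixed point), and descend to the reals by a Schwarz-reflection uniqueness argument --- but it is a genuinely different route from what Krantz and Parks themselves do, which is to work entirely inside the real analytic category: they prove the real analytic \emph{inverse} function theorem directly by the method of majorants and then deduce the implicit version by the usual reduction. Your detour through $\C$ has the advantage that holomorphic convergence bookkeeping is slightly cleaner and the complex inverse/implicit function theorems are usually already available; the direct real approach has the advantage of never leaving $\R$ and of avoiding the descent step entirely. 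One place your sketch should be tightened: the object $x\mapsto \overline{\Phi(x)}$ for real $x$ is not itself a holomorphic function to which the holomorphic uniqueness clause can be applied. You should either form the holomorphic reflection $\Psi(z):=\overline{\Phi(\bar z)}$, check $\tilde f_i(z,\Psi(z))=0$ and $\Psi(p)=q$, and conclude $\Psi\equiv\Phi$, or, more cheaply, observe that the recursion solving for the Taylor coefficients of $\Phi$ involves only the real Taylor coefficients of $f$ and the real inverse $(\partial f/\partial y)(p,q)^{-1}$, so every coefficient of $\Phi$ is real and no conjugation argument is needed at all.
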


\begin{theorem}[Identity Theorem for Real Analytic Functions]\label{thm:IT}
Given two real analytic functions $f$ and $g$ on an open and connected set $U\subset \R^n$, if $f = g$ on a nonempty open subset $S  \subseteq U$, then $f = g$ on the whole set $U$.
\end{theorem}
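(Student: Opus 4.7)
The plan is to reduce the statement to a vanishing theorem by setting $h = f - g$, which is real analytic on $U$ (this uses the composition/arithmetic properties of real analytic functions, e.g.\ Proposition \ref{prop:composition} applied to subtraction). The goal then becomes: if $h$ vanishes on a nonempty open subset $S \subseteq U$ and $U$ is open and connected, then $h \equiv 0$ on $U$. This is the classical connectedness argument that I would carry out next.

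I would introduce the set
\[
A \;=\; \{\, p \in U : (\partial^{\alpha} h)(p) = 0 \text{ for every multi-index } \alpha \in \mathds{Z}_{\geq 0}^n \,\},
\]
and show that $A$ is nonempty, closed in $U$, and open in $U$; connectedness of $U$ then forces $A = U$, and real analyticity in turn forces $h \equiv 0$ on $U$. Nonemptyness is immediate since $S \subseteq A$: on the open set $S$, $h$ is identically zero, so every partial derivative of $h$ is identically zero on $S$. Closedness follows from the fact that each partial derivative $\partial^{\alpha} h$ is continuous on $U$, so $A$ is the intersection of the closed zero sets $\{ p : (\partial^{\alpha} h)(p) = 0 \}$, which is closed.

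The main obstacle, and the step that genuinely uses real analyticity rather than mere smoothness, is openness of $A$. I would argue as follows: pick any $p \in A$. By real analyticity, there is a neighborhood $V \subset U$ of $p$ on which $h$ is represented by its Taylor series centered at $p$. Since all partial derivatives of $h$ vanish at $p$, every Taylor coefficient is zero, and hence $h \equiv 0$ on $V$. Consequently every partial derivative $\partial^{\alpha} h$ also vanishes identically on $V$, so $V \subseteq A$, proving $A$ is open.

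Finally I would combine these three properties with the assumption that $U$ is connected: a nonempty clopen subset of a connected space is the whole space, so $A = U$. At any $q \in U$, the argument used for openness shows $h$ vanishes in a neighborhood of $q$; in particular $h(q) = 0$. Hence $f(q) = g(q)$ for all $q \in U$, completing the proof. I do not expect subtleties beyond correctly handling the passage from ``all derivatives vanish at $p$'' to ``$h$ vanishes near $p$'', which is exactly where real analyticity (as opposed to $C^\infty$ smoothness, where the conclusion famously fails) is essential.
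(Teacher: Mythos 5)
Your proof is correct and takes essentially the same approach as the paper: you define the set of points where all partial derivatives of $h=f-g$ vanish (equivalently, where all partial derivatives of $f$ and $g$ agree, which is exactly the set $D$ used in the paper), then show it is nonempty, open (via the Taylor expansion, the step that uses real analyticity), and closed, and invoke connectedness. The only cosmetic difference is that you phrase things in terms of $h=f-g$ rather than comparing the derivatives of $f$ and $g$ directly.
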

See \cite{lebl2019tasty} for a proof.



\begin{lemma}\label{lem:measure0}
Let $Z_{\R}(\bm F)$ be the zero set of a real analytic system,  let $C$ be a component of $Z_{\R}(\bm F)$ in $\R^{m+n}$ of dimension $m$ and let $\bm{f}$ be a real analytic function on $\R^{m+n}$.
Then the intersection $C\cap Z_{\R}(\bm{f})$ is equal to $C$ or has measure zero over $C$.
\end{lemma}
\begin{proof}
Since $C$ is a connected component described implicitly by a real analytic system, by the Implicit Function Theorem \ref{thm:IFT}, locally $C$ can be parameterized by $m$ free coordinates. To be rigorous, we need an atlas over
$C$ which is a collection of charts depending on the free variables, with a countable open covering $C = \bigcup_i U_i$.
 Without loss of generality, we assume the parameterization is $y_j = \phi_j(x_1,...,x_m),  j=1,...,n$, where $\phi_j$ is real analytic.

By Proposition $1$ of \cite{Mityagin}, if $\bm{f}$ is a real analytic function on a connected open domain $U_i$ of $\R^{m}$, and $\bm{f}$ is not  identically zero, then $U_i\cap Z_{\R}(\bm{f})$ has measure zero. There are two possible cases: 1) $U_i\cap Z_{\R}(\bm{f})$ has measure zero for each $U_i$; or 2) $\bm f$ is identically zero over some $U_k$.
For case one, we have $C \cap Z_{\R}(\bm{f}) = \bigcup_i (U_i \cap Z_{\R}(\bm{f}))$  still has measure zero.
Otherwise,  there is a nonempty open subset $U_k$ of $C$, where $g(\bm x) = \bm f(\bm x,\phi_1(\bm x),...,\phi_n(\bm x)) = \bm 0$. By Proposition \ref{prop:composition}, $\bm{g}$ is real analytic. Due to the Identity Theorem \ref{thm:IT}, $\bm{g} = \bm{0}$ on the whole component, and thus $C \subseteq Z_{\R}(\bm{f})$.


\end{proof}

Consider a component of constraints $C$ of a real analytic {\DAE}  $\bm{F}^{(\bm{c})}$ with a real point $\bm p \in \R^n$.
Suppose $\rank \bm{\Jac}(\bm p) = r < n$. Without loss of generality, we assume that the sub-matrix $\bm{\Jac}(\bm p)[1\:r,1\:r]$ has full rank.
In the following, we will show that the rank is almost a constant over $C$.

\begin{lemma}\label{lem:whole}
Let $Z_{\R}(\bm F)$ be the zero set of a real analytic system, $C$ be a component of $Z_{\R}(\bm F)$. If $\bm{\Jac}[1\:r,1\:r]$ has full rank at some point $\bm p$ on $C$, then it is non-singular almost everywhere on $C$. 
\end{lemma}
\begin{proof}
Let $f$ be the determinant of $\bm{\Jac}[1\:r,1\:r]$.
If $f(\bm p) \neq 0$, then $C \nsubseteq Z_{\R}(f)$ and  Lemma \ref{lem:measure0} implies that $C\cap Z_{\R}(f)$ has measure zero over $C$.
\end{proof}

\begin{remark}\label{rem:constant_rank}
Let $f$ be the determinant of $\bm{\Jac}$. In Lemma \ref{lem:measure0}, consider a component $C$ of $Z_{\R}(\bm F)$.  If $C$ and the zero set of the determinant of $\bm{\Jac}$ do not intersect, then $\bm{\Jac}$ has full rank over $C$. Otherwise, there are $2$ cases of the intersection $C\cap Z_{\R}(f)$: (1) it is equal to $C$, ($2$) it has measure zero on $C$. 

Case (1) is the focus of our paper. Then the rank $r$ can be determined by using the real witness set $W$.  Please see more details in Section \ref{sec:poly}. 

For case (2), the $\bm{\Jac}$ has full rank almost everywhere on $C$. There must be a singular set on $C$, which is denoted
by $\Sing$ is those points at which $Z_{\R}(f)$ intersects with $C$. The singular set of $f= \bm 0$, denoted by $\Sing$ is those points at which $Z_{\R}(f)$ is locally not a real analytic manifold. On one hand, the $\bm{\Jac}$ has full rank over $C\backslash \Sing$. On the other hand, at a point of $\Sing$, the rank of $\bm{\Jac}$ is less than $r$, it is also a constant at this point. Our embedding method can also handle it, but it is also quite complex. Due to length limitations, to solve {\DAE}s on the singular set is outside the scope of our paper.
\end{remark}

Thus, in this paper we suppose Jacobians have constant rank over a whole component of contraunts. Jacobians with constant rank enable us to embed the zero set into a higher dimensional space.

\begin{lemma}[Constant Rank Embedding]\label{lem:proj}
Let
$$\bm f= \{f_1(x,y,z),...,f_r(x,y,z)\} \hspace{0.4cm} \mbox{and} \hspace{0.4cm} \bm g= \{g_1(x,y,z),...,g_{n-r}(x,y,z)\} $$
be two sets of real analytic functions, where
$\bm x=(x_1,...,x_r)$, $\bm y=(y_1,...,y_{n-r})$ and $\bm z=(z_1,...,z_m)$. Let $Z_{\R}(\bm F)$ be the zero set of a real analytic system, $C$ be a component of $Z_{\R}(\bm F)$ in $\R^{m+n}$.
If the Jacobian matrices $\frac{\partial (\bm f,\bm g)}{\partial (\bm x, \bm y)}$ and $\frac{\partial \bm f}{\partial \bm x}$ have constant rank $r$ on $C$,
then $$Z_{\R}(\bm f,\bm g) \cap C = \pi Z_{\R}(\bm f(\bm x,\bm y,\bm z), \bm f(\bm u,\bm \xi,\bm z),\bm g(\bm u,\bm \xi,\bm z)) \cap C$$ holds, where $\bm u=(u_1,...,u_r)$ and $\bm \xi$ is a constant vector in $\R^{n-r}$ and $\pi$ is the projection from $(\bm x,\bm y,\bm z,\bm u)$-space to $(\bm x,\bm y,\bm z)$-space.
\end{lemma}
\begin{proof} \hskip4pt
Since  $\frac{\partial \bm f}{\partial \bm x}$ has constant rank $r$ on $C$, by the Implicit Function Theorem \ref{thm:IFT} and the Identity Theorem \ref{thm:IT}, there exists a set of real analytic functions $\bm \phi = \{\phi_1,...,\phi_r\}$  such that
$\bm f(\bm \phi(\bm y,\bm z),\bm y,\bm z)=\bm 0$ for any $(\bm y,\bm z) \in \pi_{\bm{yz}}(C)$, where $\pi_{\bm{yz}}$ is the
projection from $(\bm x,\bm y,\bm z,\bm u)$-space to $(\bm y,\bm z)$-space. Thus, $$\frac{\partial \bm f}{\partial \bm x} \frac{\partial \bm \phi}{\partial \bm y} + \frac{\partial \bm f}{\partial \bm y} =\bm 0\,. $$

Since $\frac{\partial (\bm f,\bm g)}{\partial (\bm x, \bm y)}$ also has constant rank $r$, $(\frac{\partial \bm \phi}{\partial \bm y}, I)^t$  is in the
null-space of $\frac{\partial (\bm f,\bm g)}{\partial (\bm x, \bm y)}$. So $$\frac{\partial \bm g}{\partial \bm x} \frac{\partial \bm \phi}{\partial \bm y} + \frac{\partial \bm g}{\partial \bm y} =\bm 0\,. $$

Let $\bm G(\bm y,\bm z) = \bm g(\bm \phi(\bm y,\bm z),\bm y,\bm z)$. We have $\frac{\partial \bm G}{\partial \bm y} = \frac{\partial \bm g}{\partial \bm x} \frac{\partial \bm \phi}{\partial \bm y} + \frac{\partial \bm g}{\partial \bm y} =\bm 0$, which implies that $\bm G(\bm y,\bm z) = \bm G(\bm \xi,\bm z)$ for any constant $\bm \xi \in \R^{n-r}$.

If $\bm p=(\bm p_{\bm{x}},\bm p_{\bm{y}},\bm p_{\bm{z}}) \in Z_{\R}(\bm f,\bm g) \cap C$, then $\bm p_{\bm{x}}= \bm \phi(\bm p_{\bm{y}},\bm p_{\bm{z}})$. Let $\bm p_{\bm{u}}= \bm \phi(\bm \xi,\bm p_{\bm{z}})$ for some constant vector $\bm \xi$, and let $\hat{\bm p}= (\bm p_{\bm{x}},\bm p_{\bm{y}},\bm p_{\bm{z}},\bm p_{\bm{u}})$. It is straightforward to verify that $\bm f(\bm p_{\bm{u}},\bm \xi,\bm p_{\bm{z}})=\bm 0$ and $\bm g(\bm p_{\bm{u}},\bm \xi,\bm p_{\bm{z}})= \bm G(\bm \xi,\bm p_{\bm{z}}) =  \bm G(\bm p_{\bm{y}},\bm p_{\bm{z}}) = \bm g(\bm p_{\bm{x}},\bm p_{\bm{y}},\bm p_{\bm{z}}) = \bm 0$.  Therefore, $\hat{\bm p} \in Z_{\R}(\bm f(\bm x,\bm y,\bm z), \bm f(\bm u,\bm \xi,\bm z),\bm g(\bm u,\bm \xi,\bm z))$. Thus $\bm p \in \pi Z_{\R}(\bm f(\bm x,\bm y,\bm z), \bm f(\bm u,\bm \xi,\bm z),\bm g(\bm u,\bm \xi,\bm z)) \cap C$, proving $Z_{\R}(\bm f,\bm g) \cap C \subseteq \pi Z_{\R}(\bm f(\bm x,\bm y,\bm z), \bm f(\bm u,\bm \xi,\bm z),\bm g(\bm u,\bm \xi,\bm z)) \cap C$.

For any  $\bm p=(\bm p_{\bm{x}},\bm p_{\bm{y}},\bm p_{\bm{z}}) \in \pi Z_{\R}(\bm f(\bm x,\bm y,\bm z), \bm f(\bm u,\bm \xi,\bm z),\bm g(\bm u,\bm \xi,\bm z)) \cap C$, we have $\bm p_{\bm{x}}= \bm \phi(\bm p_{\bm{y}},\bm p_{\bm{z}})$ and ${\bm{u}}= \bm \phi(\bm \xi,\bm p_{\bm{z}})$.
Also $\bm g(\bm u,\bm \xi,\bm p_{\bm{z}})=\bm 0 \Rightarrow \bm 0=\bm G(\bm \xi,\bm p_{\bm{z}})=\bm G(\bm p_{\bm{y}},\bm p_{\bm{z}})= \bm g(\bm p_{\bm{x}},\bm p_{\bm{y}},\bm p_{\bm{z}})$.
So $\bm p\in Z_{\R}(\bm f,\bm g)\cap C$, proving $Z_{\R}(\bm f,\bm g) \cap C \supseteq \pi Z_{\R}(\bm f(\bm x,\bm y,\bm z), \bm f(\bm u,\bm \xi,\bm z),\bm g(\bm u,\bm \xi,\bm z)) \cap C$.

\end{proof}

\section{The Embedding Method for S-unamenable  {\DAE}s}\label{sec:implictit method}
In this section, we mainly focuses on the  combinatorial relaxation framework {\cite{Murota95}.
We construct a new {\DAE} ${\bm{G}}$ with the same solution space of the {\DAE} $\bm{F}$ and associate optimal values $\delta(\bm{F})$ and $\delta({\bm{G}})$ satisfying $0\leq\delta({\bm{G}})<\delta(\bm{F})$. 

If we have an initial point, then according to Lemma {\ref{lem:whole}}, then the rank of Jacobian matrix
on almost everywhere of the corresponding component of constraints of the {\DAE} can be calculated by singular value decomposition (SVD) given by \cite{Golub13}.

By Remark \ref{rem:constant_rank}, suppose $Z_{\R}(\bm{F}^{(\bm{c})})$ has constant rank {\ie}
\begin{equation}\label{eq:rank}
\rank \bm{\Jac} = r =\rank \bm{\Jac}[1\:r,1\:r]  < n
\end{equation}
over a component of constraints $C$ of $Z_{\R}(\bm{F}^{(\bm{c-1})})$. To simplify our description, we specify the full rank submatrix to be $\bm{\Jac}[1\:r,1\:r]$, which always can be done by proper permutations of variables and equations given by Algorithm \ref{alg:4} (see \ref{sec:alg}).

\begin{define}\label{define_IRE}
The embedding Method: Suppose $(\bm{c},\bm{d})$ is the optimal solution of Problem (\ref{LPP}) for a given {\DAE} $\bm{F}$, and then differentiated {\DAE} $\bm{F}^{(\bm{c})} = \{  \bm{B}_{k_c},\bm{F}^{(\bm{c}-1)}\}$ has constant rank $\rank \bm{\Jac} = r < n$. Let $\bm{s}= (x_1^{(d_1)}, ..., x_r^{(d_r)})$, $\bm{y}= (x_{r+1}^{(d_{r+1})},...,x_n^{(d_n)})$ and $\bm{z}= (t,\bm{X},\bm{X}^{(1)},...,\bm{X}^{(k_d-1)})$, then $\bm{B}_{k_c}=\{ \bm{f(s,y,z)},\bm{g(s,y,z)}\}$, where $\bm{f(s,y,z)}=\{F_1^{(c_1)},..., F_r^{(c_r)}\}$ with full rank Jacobian and $\bm{g(s,y,z)}=\{F_{r+1}^{(c_{r+1})},..., F_n^{(c_n)}\}$.
 Then $\bm{G}$ is constructed by the following steps:

\begin{enumerate}
  \item  Introduce $n$ new equations $\hat{\bm{F}}=\{\bm{f(u,\xi,z)},\bm{g(u,\xi,z)}\}$:
to replace $\bm{s}$ in the top block $\bm{B}_{k_c}$ by $r$ new dependent variables $\bm{u}=(u_1,...,u_r)$  respectively, and simultaneously replace $\bm{y}$ in the top block $\bm{B}_{k_c}$ by $n-r$ constants $\bm{\xi}=(\xi_1,...,\xi_{n-r})\in \R^{n-r}$ respectively.
  \item Construct a new square subsystem
\begin{equation}\label{eq:newsys}
\bm{F}^{\rm aug} = \{ \bm{f(s,y,z)},\bm{\hat{F}(s,y,z)}\}.
\end{equation}
\item Construct $\bm{G}= \{\bm{F}^{\rm aug} ,\bm{F}^{(\bm{c}-1)}\}$.
\end{enumerate}
\end{define}

where $\bm{F}^{\rm aug}$ has $n+r$ equations with $n+r$ leading variables $\{\bm{X}^{(k_d)}, \bm{u}\}$ and $\bm{X}^{(k_d)}=\{\bm{s},\bm{y}\}$.

Since this reduction step introduces a new variable $\bm{u}$, the corresponding lifting of the consistent initial values must be addressed.
One approach to this problem is to solve the new system $\bm{F}^{\rm aug}$ to obtain lifted consistent initial values.
But this approach is unnecessary and expensive.
According to Definition \ref{define_IRE}, the consistent initial values of the new variables $\bm{u}$ can simply be taken as the initial values of their replaced variables $\bm{s}$. Then $\bm{\xi}$ takes the same initial value as was assigned to $\bm{y}$.

By the way, for the case ($2$) in Remark {\ref{rem:constant_rank}}, the initial values of the singular point is also beneficial for the success of the embedding method.


\begin{theorem}\label{thm:result}
Let $(\bm{c},\bm{d})$ be the optimal solution of Problem (\ref{LPP}) for a given {\DAE} $\bm{F}$.
Let $\bm{F}^{(\bm{c})} = \{ \bm{B}_{k_c}, \bm{F}^{(\bm{c}-1)} \}$ as defined in Equation (\ref{eq:B_i}). If $\bm{F}^{(\bm{c})}$ satisfies (\ref{eq:rank}), and $C$ is a component of constraints of $Z_{\R}(\bm{F}^{(\bm{c-1})})$ in $\R^{\sum d_j+n}$, then
$$Z_{\R}(\bm{F}^{(\bm{c})})\cap C = \pi Z_{\R}(\bm{G}) \cap C$$
where $\bm{G}= \{\bm{F}^{\rm aug},\bm{F}^{(\bm{c}-1)} \}$ as defined in Definition \ref{define_IRE}.   
 Moreover, we have $\delta(\bm{G}) \leq  \delta(\bm{F}) - (n-r)$.
\end{theorem}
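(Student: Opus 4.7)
The plan is to attack the two claims separately. The set identity will follow from Lemma \ref{lem:proj} applied to the top block of $\bm{F}^{(\bm{c})}$, and the bound on $\delta(\bm{G})$ will follow from Definition \ref{define_delta1} together with one explicit feasible point of the integer LP associated with $\bm{F}^{aug}$.

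For the set identity, observe that $\bm{F}^{(\bm{c})} = \bm{F}^{(\bm{c}-1)}\cup\{\bm{f}(s,y,z),\bm{g}(s,y,z)\}$ and $\bm{G} = \bm{F}^{(\bm{c}-1)}\cup\bm{F}^{aug}$ share the subsystem $\bm{F}^{(\bm{c}-1)}$, which is independent of the fresh variables $\bm{u}$. The projection $\pi$ that drops the $\bm{u}$-coordinates therefore commutes with intersecting by $Z_{\R}(\bm{F}^{(\bm{c}-1)})$, so the desired equality reduces to
\[
Z_{\R}(\bm{f},\bm{g})\cap C \;=\; \pi\, Z_{\R}\bigl(\bm{f}(s,y,z),\bm{f}(u,\xi,z),\bm{g}(u,\xi,z)\bigr)\cap C.
\]
This is precisely Lemma \ref{lem:proj}, whose hypotheses --- that $\partial(\bm{f},\bm{g})/\partial(s,y) = \bm{\Jac}$ and $\partial\bm{f}/\partial s = \bm{\Jac}[1{:}r,1{:}r]$ both have constant rank $r$ on $C$ --- follow from the rank assumption (\ref{eq:rank}) at the witness point combined with Lemma \ref{lem:whole}, which propagates the rank conditions from the witness point to the whole of $C$ with probability one.

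For the $\delta$-bound, apply Definition \ref{define_delta1} to $\bm{G}$ with square top block $\bm{A}=\bm{F}^{aug}$ (having $n+r$ equations and $n+r$ leading variables $\bm{X}^{(k_d)}\cup\bm{u}$) and algebraic block $\bm{B}=\bm{F}^{(\bm{c}-1)}$ of size $\sum c_i$; this yields $\delta(\bm{G}) = \delta(\bm{F}^{aug})-\sum c_i$, so by Proposition \ref{prop:extend_delta} the claim reduces to $\delta(\bm{F}^{aug}) \le \sum d_j - (n-r)$. I prove this bound by exhibiting a feasible point of LP (\ref{LPP}) for the signature matrix of $\bm{F}^{aug}$: set $c^{aug}_i=0$ on the $r$ equations of $\bm{f}(s,y,z)$, set $c^{aug}_i=1$ on the $n$ equations of $\hat{\bm{F}}=\{\bm{f}(u,\xi,z),\bm{g}(u,\xi,z)\}$, and set $d^{aug}_{x_j}=d_j$, $d^{aug}_{u_k}=1$. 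The constraints split into three families: (a) $\bm{f}(s,y,z)$ versus $x_j$: $d_j \ge \sigma_{ij}+c_i$, which is just feasibility of the original Pryce LP for $\bm{F}$; (b) $\hat{\bm{F}}$ versus $x_j$: $d_j-1 \ge \sigma^{aug}_{ij}$, valid because the substitutions $s\mapsto u$ and $y\mapsto\xi$ delete every occurrence of $x_j^{(d_j)}$; and (c) $\hat{\bm{F}}$ versus $u_k$: $1-1 \ge 0$. The objective evaluates to $\sum d_j + r - n$, which is the required bound.

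The main obstacle is the signature bookkeeping in family (b): one must argue that no equation of $\hat{\bm{F}}$ still carries any derivative of order $d_j$ of any $x_j$. This uses the very construction of $\bm{F}^{aug}$, where every leading-order slot of the top block $\bm{B}_{k_c}$ --- whether an $\bm{s}$-entry (replaced by a new variable $\bm{u}$) or a $\bm{y}$-entry (replaced by the constant vector $\bm{\xi}$) --- is overwritten, so the highest derivative of each $x_j$ in $\hat{\bm{F}}$ drops to at most $d_j-1$. Once this observation is in place, the remainder is a direct application of Lemma \ref{lem:proj} together with a routine LP feasibility check.
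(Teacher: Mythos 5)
Your proposal is correct and follows essentially the same route as the paper's own proof: the set identity is obtained by applying Lemma~\ref{lem:proj} to the top block and peeling off the shared subsystem $\bm{F}^{(\bm{c}-1)}$ (which, being $\bm{u}$-free, turns the projection of an intersection into an intersection of projections), and the bound $\delta(\bm{G}) \le \delta(\bm{F}) - (n-r)$ is established by exhibiting exactly the same feasible pair $(\bar{\bm{c}},\bar{\bm{d}})$ for the Pryce LP of $\bm{F}^{aug}$ and invoking Definition~\ref{define_delta1} together with Proposition~\ref{prop:extend_delta}. The only cosmetic difference is that you re-derive the constant-rank condition on $C$ from the witness point via Lemma~\ref{lem:whole}, whereas the paper takes it directly as the hypothesis (\ref{eq:rank}); the substance of the argument is the same.
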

\begin{proof}
By the Constant Rank Embedding Lemma \ref{lem:proj} and Definition \ref{define_IRE}, since the constants involved can be arbitrarily ascribed, we easily get $Z_{\R}(\bm{B}_{k_c}) \cap C =  \pi Z_{\R}(\bm{F}^{\rm aug}) \cap C$.  Further, since
$\bm{F}^{(\bm{c}-1)}$ is common to
both ${\bm{F}^{(\bm{c})}}$ and ${\bm{G}}$ we have $Z_{\R}(\bm{F}^{(\bm{c})})\cap C = \pi Z_{\R}(\bm{G}) \cap C$.

\begin{table}
\caption{signature matrix of $\bm{F}^{\rm aug}$ and a feasible pair of $(\bar{\bm{c}},\bar{\bm{d}})$}\label{table:tritop}
\renewcommand\arraystretch{1.2}
\begin{tabular}{cccc}
&&\multicolumn{1}{c}{ $\bar{\bm{d}}(1,...,n)=\bm{d}\quad $}  &   \multicolumn{1}{c}{ $\quad \bar{\bm{d}}(n+1,...,n+r)=\bm{1}$ } \cr\cmidrule{2-4}
  &\multicolumn{1}{|c|}{$\sigma_{i,j}(\bm{F}^{\rm aug})$}& \multicolumn{1}{|c|}{$\bm{x}$ }& \multicolumn{1}{|c|}{$\bm{u}$} \cr\cmidrule{2-4}
   \multicolumn{1}{l|}{$\bar{\bm{c}}(1,...,r)= \bm{0}$} &\multicolumn{1}{|c|}{$\bm{f(s,y,z)}$} & \multicolumn{1}{|c|}{$\sigma_{i,j}\leq d_{j}$} & \multicolumn{1}{|c|}{$\sigma_{i,j}=-\infty$} \cr\cmidrule{2-4}
   \multicolumn{1}{l|}{$\bar{\bm{c}}(r+1,...,n+r)= \bm{1}$ } & \multicolumn{1}{|c|}{$\hat{\bm{F}}(\bm{z},\bm{\xi},\bm{u})$} &  \multicolumn{1}{|c|}{$\sigma_{i,j}\leq d_{j}-1$}     &  \multicolumn{1}{|c|}{$\sigma_{i,j}=0$ or $-\infty$} \cr\cmidrule{2-4}
\end{tabular}
\end{table}

According to Table \ref{table:tritop}, we construct a pair  $(\bar{\bm{c}},\bar{\bm{d})}$:
\begin{equation}\label{opt_soln}
 \bar{c}_{i}= \left\{%
\begin{array}{ll}
    0,& i=1,\cdots,r;\\
    1,& i=(r+1),\cdots,(n+r). \\
\end{array}%
\right., \bar{d}_{j}= \left\{%
\begin{array}{ll}
    d_{j},& j=1,\cdots,n; \\
    1,& j=(n+1),\cdots,(n+r). \\
\end{array}%
\right.
\end{equation}

For $1\leq i \leq r$ and $ 1\leq j \leq n$, the signature matrix of $\bm{F}^{\rm aug}$ is the same as $\bm{B}_{k_c}[1:r,1:n]$, implying that $\sigma_{i,j}(\bm{f})\leq d_{j}-0=\bar{d}_{j}-\bar{c}_{i}$.

For $1\leq i \leq r$ and  $ (n+1)\leq j \leq (n+r)$, $\sigma_{i,j}(\bm{f})=-\infty<1-0=\bar{d}_{j}-\bar{c}_{i}$.

 For $(r+1)\leq i \leq (n+r)$ and $ 1\leq j \leq n$, since $\bm{s}$ and $\bm{y}$ in $\hat{\bm{F}}$ have been replaced with dummy variables and constants, we have:  \\
 $\sigma_{i,j}(\hat{\bm{F}})\leq \sigma_{i,j}(\bm{B}_{k_c})-1 \leq d_{j}-1= \bar{d}_{j}-\bar{c}_{i}$

  For $(r+1)\leq i \leq (n+r)$ and $(n+1)\leq j \leq (n+r)$,  $\sigma_{i,j}(\hat{\bm{F}})\leq 0 = \bar{d}_{j}-\bar{c}_{i}$.

 To sum up, $(\bar{\bm{c}},\bar{\bm{d})}$ is feasible pair of the Problem (\ref{LPP}) for $\bm{F}^{\rm aug}$. Thus, $\delta(\bm{F}^{\rm aug})\leq \sum\limits^{n+r}_{j=1}\bar{d}_{j}-\sum\limits^{n+r}_{j=1} \bar{c}_{i} =\sum\limits^{n}_{j=1}{d}_{j}-(n-r) = \delta(\bm{B}_{k_c}) -(n-r) $.



Obviously, since both ${\bm{F}^{(\bm{c})}}$ and ${\bm{G}}$ have the same
block of constraints $\bm{F}^{(\bm{c}-1)}$, according to Definition {\ref{define_delta1}}, it follows that $\delta({\bm{G}})-\delta({\bm{F}^{(\bm{c})}})=\delta(\bm{F}^{\rm aug})-\delta(\bm{B}_{k_c})\leq -(n-r)$. Finally,  $\delta(\bm{G}) \leq \delta(\bm{F}^{(\bm{c})}) - (n-r)= \delta(\bm{F}) - (n-r)$, since $\delta(\bm{F})=\delta(\bm{F}^{(\bm{c})})$ by Proposition \ref{prop:extend_delta}.
\end{proof}


Since $\rank \bm{\Jac} = r$, restoring regularity is equivalent is some sense to finding $n-r$ hidden constraints by elimination.

Although there are more dependent variables in $\bm{G}$, the computational cost is much lower than explicit symbolic elimination, since $\bm{G}$ and the corresponding lifted initial value points can be easily constructed.

Moreover, since the special structure of $\bm{G}$, in the embedding method, the feasible pair $(\bar{\bm{c}},\bar{\bm{d})}$  given in Equation (\ref{opt_soln}) without solving Problem (\ref{LPP}) is an optimal solution in all examples in Section \ref{sec:ex}. Theoretically, Lemma {\ref{lem:lifting}} below
shows that the feasible pair $(\bar{\bm{c}},\bar{\bm{d})}$ is optimal under some reasonable assumptions.

\begin{lemma}\label{lem:lifting}
Suppose each equation $F_{i}$ in the top block $\bm{B}_{k_c}$ of a {\DAE} $\bm F$
contains at least one variable  $x_{j}\in \bm{X}^{(k_d)-1}$.  If the bipartite graph of $\bm F$ has a perfect matching, then $(\bar{\bm{c}},\bar{\bm{d})}$ in Equation (\ref{opt_soln}) is an optimal solution and $\delta(\bm{G}) =  \delta(\bm{F}) - (n-r)$.
\end{lemma}
\begin{proof}
According to the Table \ref{table:tritop}, since $\bm{f(s,y,z)}$ is a part of {\DAE} $\bm{F}$, its corresponding $(\bar{\bm{c}}[1:r],\bar{\bm{d}}[1:n])$ is optimal. If $(\bar{\bm{c}},\bar{\bm{d}})$ is not an optimal solution, then there must be a feasible pair $({\bm{c}},{\bm{d})}$ satisfies one of the following four cases, such that $(\sum \bm{d}- \sum {\bm{c}})\leq \sum (\bar{\bm{d}}- \sum \bar{\bm{c}})$. The Lemma is now proved by contradiction.

(1) $\bm{c}=\bar{\bm{c}}$ and at least one element in ${\bm{d}}[(n + 1):(n + r)]$ is $0$. It is easy to prove it does not satisfy $d_j-c_i\geq \sigma_{i,j}$.

(2) $\bm{d}=\bar{\bm{d}}$ and at least one element in ${\bm{c}}[(r + 1):(n + r)]$ is more than $1$. It also does not satisfy $d_j-c_i\geq \sigma_{i,j}$.

(3) Some elements in ${\bm{c}}[(r + 1):(n + r)]$ are zeros and more elements in ${\bm{d}}[(n + 1):(n + r)]$ are also zeros. That implies, at least $2$ of the highest derivatives of variables in $\bm{u}$ only occur in $1$ equation.  This contradicts the perfect matching condition.

(4) Some elements in ${\bm{c}}[(r + 1):(n + r)]$ are $>1$, and some elements in ${\bm{d}}[(n + 1):(n + r)]$ are also $>1$ , such that $\sum {\bm{d}}[(n + 1):(n + r)]-\sum {\bm{c}}[(r + 1):(n + r)]< (r-n)$. Since at least one of $\bm{X}^{(k_d)-1}$ occurs in every equation of $\hat{\bm{F}}(\bm{z},\bm{\xi},\bm{u})\}$, all elements in ${\bm{c}}[(r + 1):(n + r)]$ must be $\leq 1$, contrary to our assumption.

Since $(\bar{\bm{c}},\bar{\bm{d})}$ is an optimal solution of Problem (\ref{LPP}) for $\bm{F}^{\rm aug}$ it follows that,
$\delta(\bm{F}^{\rm aug}) =  \delta(\bm{B}_{k_c}) -(n-r) \Rightarrow \delta(\bm{G}) =  \delta(\bm{F}) - (n-r)$.
\end{proof}

\section{Global Structural Analysis Method For Polynomial {\DAE}s}\label{sec:poly}

 {\DAE}s may have more than one components of constraints (see Example {\ref{ex:3}}). The goal of global structural analysis methods is to construct corresponding $\bm{G}$ for every component of constraints to restore regularity for S-unamenable {\DAE}s.

Theoretically, if there is a solver can find at least one consistent initial point on
each component of constraints, the global structural analysis method performs well. Indeed real analytic {\DAE}s may have infinitely many components of constraints, unlike polynomial {\DAE}. Moreover, since the homotopy continuation methods can only provide all zero sets of a square polynomial system, we just consider polynomial {\DAE}s in this section. Therefore, we propose a global structural analysis method based on the embedding method and real algebraic geometry to detect and regularize S-unamenable cases
without using symbolic methods, such as determinants or Gr\"{o}bner bases.





Numerical algebraic geometry~\cite{SVW05,Hauenstein2017} was pioneered by Sommese, Wampler, Verschelde and others for describing positive dimensional zero sets of polynomial systems (see \cite{BHSW13,SommeseWampler05} for references and background). The approach is built on \textbf{witness points} which arise by slicing the zero sets of a polynomial system (see \cite{CoxLittleOshea07} for definitions) with appropriate arbitrary planes of complementary dimension. These complex witness points can be efficiently computed by homotopy continuation solvers \cite{Lee2008}, and are theoretically guaranteed to compute at least one such point on each component of the zero sets of the
 polynomial system.

For the real case, the methods in \cite{WuReid13,WRF17} yield real witness points as critical points of the distance from a arbitrary hyperplane to the real analytic manifold.
Alternatively,  the real witness points can be considered as critical points of the distance from an arbitrary point to the real analytic manifold \cite{Hauenstein12}.

%
%
%
More precisely, to solve a polynomial system $ \bm f = \{f_1,...,f_k\}$ with variables $\{x_1,...,x_n\}$, we first choose an arbitrary point $\randpoint \in \R^n$ (for numerical stability, we usually choose a random point $\randpoint$ on the unit sphere of $\R^n$).  Then there is at least one point on each  component of $Z_{\R}(\bm f)$ with minimal distance to $\randpoint$ satisfying the following problem:
\begin{eqnarray}\label{eq:opt2}
 \min \;  \sum_{i=1}^n (x_i-\randpoint_i)^2/2 \nonumber \\
 s.t. \hspace{1cm} \bm f(\bm x) = 0   \nonumber.
\end{eqnarray}

This optimization problem can be formulated as a square system by using Lagrange multipliers, i.e. $\bm g = \{\bm f,  \sum_{i=1}^k \lambda_i \nabla f_i + x_i - \randpoint_i \} = 0.$

When $\bm f$ satisfies the regularity assumptions in \cite{WuReid13}, all the real zero sets of $\bm g = \bm 0$ can be obtained by
the homotopy continuation method.
These points are called real witness points of $Z_{\R}(\bm f)$, where $Z_{\R}(\bm f)=\{\bm x\in \mathbb{R}^{n} : \bm f(\bm x)=0\}$. Although the homotopy continuation  solver \cite{BHSW13} is based on floating-point arithmetic, it can theoretically guarantee to obtain real zero sets with sufficient accuracy \cite{BCL2013}.
These real zero sets of the constraint equations provide initial points for every component of constraints of a {\DAE}.

\begin{define}\label{def5.1}
For a polynomial system $\bm f$, if a finite set $W\subseteq \R^n$ contains at least one point on each  component of $Z_{\R}(\bm f)$, then $W$ is called a \textbf{real witness set} of $Z_{\R}(\bm f)$ and the points of $W$ are called \textbf{real witness points}. So $W$ is not unique.
\end{define}




\begin{theorem}\label{thm:vanish}
Let $\bm g$ be a polynomial system and $W$ be a real witness set of $Z_{\R}(\bm g)$. If another polynomial system $\bm f(\bm p) = 0$ for any $\bm p\in W$, then $Z_{\R}(\bm g) \subseteq Z_{\R}(\bm f)$ for almost all choices of $W$.
\end{theorem}
\begin{proof}
According to Definition \ref{def5.1}, $Z_{\R}(\bm g)$ consists of finitely many  components of $Z_{\R}(\bm g)$.
By Lemma \ref{lem:measure0}, for each component of $Z_{\R}(\bm g)$, the intersection $(Z_{\R}(\bm g)) \cap  Z_{\R}(\bm f)$ has measure zero over
$Z_{\R}(\bm g)$, unless $(Z_{\R}(\bm g)) \subseteq Z_{\R}(\bm f)$. 

Since $\bm p$ is an arbitrary point on $Z_{\R}(\bm g)$, it does not belong to the measure zero set $(Z_{\R}(\bm g)) \cap  Z_{\R}(\bm f)$ for almost all choices of $\bm{p}$. Therefore, $Z_{\R}(\bm g) \subseteq Z_{\R}(\bm f)$ for almost all choices of $W$. Since $Z_{\R}(\bm f)$ is a closed set, the closure of $Z_{\R}(\bm g)$, which is $Z_{\R}(\bm g)$, must be contained in $Z_{\R}(\bm f)$.
\end{proof}

In summary, Nedialkov and Pryce \cite{Nedialkov2008} already provided a DAETS code for solving {\DAE} initial value problems efficiently and stably by Taylor series expansion for S-amenable {\DAE}s. Our global structural analysis method enable such approaches to be extended to S-unamenable {\DAE}.  Our approach ( see Algorithm \ref{alg:7} in \ref{sec:alg}) can be described as follows: Firstly, we can find a witness set $W$ of the constraints by homotopy continuation methods. After we obtain a witness set $W$, it is unnecessary to compute the determinant of $\bm{\Jac}$ during the detection of S-unamenable cases by Theorem \ref{thm:vanish}.
We can simply substitute $W$ into the Jacobian matrix and compute its numerical rank $r$ by SVD. According to Lemma \ref{lem:whole} and Theorem \ref{thm:vanish}, we can obtain the rank of the Jacobian matrix on all components of constraints. Finally, we apply the embedding method to regularize this {\DAE} on each component of constraints.

\section{Examples}\label{sec:ex}
In this section, we give three examples. These include two symbolic cancellation examples: a transistor amplifier \cite{Mazzia08}, and a modified pendulum \cite{Mazzia08}. 
Also included is one numerical degeneration example: the bending deformation of a beam (Example \ref{ex:3}).

In particular, we apply the following four methods to the above $3$ {\DAE}s: 
(a) the $\Sigma$-method, 
(b) the substitution method, (c) the augmentation method, and (d) the embedding method (see Appendix \ref{sec:alg} for algorithms). Here, the code of method (b) and method (c) are available in the Git repository established at \url{https://github.com/OptMist-Tokyo/DAEPreprocessingToolbox}.


\subsection{Transistor Amplifier}\label{ssec:exam1}
%
%

First, we discuss a transistor amplifier example from an electrical network. It is a linear {\ODE} system with an identically singular Jacobian matrix:
\[\left\{\begin{array}{rcc}
C_1\cdot(\dot{x}_{1}-\dot{x}_{2})+(x_{1}-U_{e})/R_{0}&=&0\\
C_1\cdot(\dot{x}_{1}-\dot{x}_{2})-(1-\alpha)\cdot f(x_{2}-x_{3})+U_b/R_2-x_{2}\cdot(1/R_1+1/R_2)&=&0\\
C_2\cdot\dot{x}_{3}+x_{3}/R_3-f(x_{2}-x_{3})&=&0\\
C_3\cdot(\dot{x}_{4}-\dot{x}_{5})+x_{4}/R_4-U_b/R_4+\alpha\cdot f(x_{2}-x_{3})&=&0\\
C_3\cdot(\dot{x}_{4}-\dot{x}_{5})-x_{5}\cdot(1/R_5+1/R_6)+U_b/R_6-(1-\alpha)\cdot f(x_{5}-x_{6})&=&0\\
C_4\cdot\dot{x}_{6}+x_6/R_7-f(x_{5}-x_{6})&=&0\\
C_5\cdot(\dot{x}_{7}-\dot{x}_{8})+x_{7}/R_8-U_b/R_8+\alpha\cdot f(x_{5}-x_{6})&=&0\\
C_5\cdot(\dot{x}_{7}-\dot{x}_{8})-x_{8}/R9&=&0
\end{array}\right.\]

where $f(x)=\beta(exp(x/U_F)-1)$ and $U_{e}= 0.1\cdot sin(200\pi t)$ with constant parameters $U_b$, $U_F$, $\alpha$, $\beta$, $R_0, R_1, \ldots, R_9$, and $C_1, \ldots, C_5$. For more details, see \cite{Mazzia08}.

The structural information obtained by the $\sigma$-method is that the dual optimal solution is $\bm{c}=\bm{0}_{1\times 8}$ and $\bm{d}=\bm{1}_{1\times 8}$, such that $n=\delta=8$. For the Jacobian matrix,  we have $\rank \bm{\Jac} = r =\rank \bm{\Jac}[(4,6,3,1,7),(4,6,3,1,7)]= 5$.

Obviously, we still cannot solve the system directly after the $\Sigma$-method. Fortunately, as it is a linear {\DAE}, almost all existing improved structural methods can be used to regularize it.

It is easy to get $\bm{F}=\bm{F}^{(\bm{c})}$ since $\bm{c}$ is a zero vector.
By the embedding method, according to Definition {\ref{define_IRE}}, we have $\bm{s}=\{\dot{x}_{4}, \dot{x}_{6}, \dot{x}_{3}, \dot{x}_{1}, \dot{x}_{7}\}$, $\bm{y}=\{\dot{x}_{2}, \dot{x}_{5}, \dot{x}_{8}\}$, $\bm{f(s,y,z)}=\{F_{4}, F_6, F_3, F_1, F_7\}$ and $\bm{g(s,y,z)}=\{F_{2}, F_5, F_8\}$. Thus, $\hat{\bm{F}}=\{\bm{f(u,\xi,z)},\bm{g(u,\xi,z)}\}$,
where $\bm{s}$ and $\bm{y}$ are replaced by $(u_1, u_2, u_3, u_4, u_5)$ and some constants $(\xi_1, \xi_2, \xi_3)\in \R^{3}$ respectively.  Finally, we construct a new top block $\bm{F}^{\rm aug} = \{\bm{f(s,y,z)},\hat{\bm{F}}\}$ of the differentiated {\DAE}, where $\hat{\bm{F}}$ is given below.


\[\hat{\bm{F}}=\left\{\begin{array}{rcc}
C_1\cdot(u_4-\xi_1)+(x_{1}-U_{e})/R_{0}&=&0\\
C_1\cdot(u_4-\xi_1)-(1-\alpha)\cdot f(x_{2}-x_{3})+U_b/R_2-x_{2}\cdot(1/R_1+1/R_2)&=&0\\
C_2\cdot u_3+x_{3}/R_3-f(x_{2}-x_{3})&=&0\\
C_3\cdot(u_1-\xi_2)+x_{4}/R_4-U_b/R_4+\alpha\cdot f(x_{2}-x_{3})&=&0\\
C_3\cdot(u_1-\xi_2)-x_{5}\cdot(1/R_5+1/R_6)+U_b/R_6-(1-\alpha)\cdot f(x_{5}-x_{6})&=&0\\
C_4\cdot u_2+x_6/R_7-f(x_{5}-x_{6})&=&0\\
C_5\cdot(u_5-\xi_3)+x_{7}/R_8-U_b/R_8+\alpha\cdot f(x_{5}-x_{6})&=&0\\
C_5\cdot(u_5-\xi_3)-x_{8}/R9&=&0
\end{array}\right.\]

After the embedding method, we can directly construct an optimal solution of the dual problem of {\AP} with $\bar{\bm{c}}=(\bm{0}_{1\times 5},\bm{1}_{1\times 8})$ and $\bar{ \bm{d}}=(\bm{1}_{1\times 8},\bm{1}_{1\times 5})$ by Lemma \ref{lem:lifting}.
Actually it is equivalent to the optimal solution $\bar{\bm{c}}=(\bm{0}_{1\times 5},1,1,0,1,1,0,1,1)$ and $\bar{ \bm{d}}=(\bm{1}_{1\times 8},1,0,0,1,1)$  calculated by the $\Sigma$-method and
both give the same optimal value of the new system $\bar {\delta} =  \delta-n+r = 8-8+5$.

Then we can verify that the determinant of the new Jacobian matrix is a non-zero constant. Furthermore, the embedding method in this example finishes the index reduction after one iteration, rather than $3$ iterations by the substitution method or the augmentation method shown in section $6.2$ \cite{Taihei19}. So the embedding method is more efficient for this example.

Specifically, for the numerical solution of this problem, the initial value of $\bm{u}(0)$ and $\bm{\xi}$ are set corresponding to $\dot{\bm x}(0)$ respectively. 

\subsection{Non-linearly Modified Pendulum}\label{ssec:exam2}
%

This nonlinear {\DAE} system consisting of $4$
differential equations and $1$ algebraic equation is obtained by dynamic analysis and modeling of a simple pendulum. See \cite{Mazzia08} for more details. The system is:

\[\left\{\begin{array}{rcc}
\dot{x}_{4}-{x}_{1}\cdot{x}_{2}\cdot\cos(x_{3})&=&0\\
\dot{x}_{5}-{x}_{2}^{2}\cdot\cos({x}_{3})\cdot\sin(x_{3})+g&=&0\\
{x}_{1}^{2}+{x}_{2}^{2}\cdot\sin({x}_{3})^{2}-1&=&0\\
\tanh((\dot{x}_{1}-x_{4}))&=&0\\
\dot{x}_{2}\cdot\sin(x_{3})+x_{2}\cdot\dot{x}_{3}\cdot\cos(x_{3})-x_{5}&=&0
\end{array}\right.\]

After structural analysis, we get the dual optimal solution $\bm{c}=(0,0,1,0,0)$ and $\bm{d}=(1,\cdots,1)$, with $\delta=4$ and $n=5$. Moreover, the rank of the Jacobian matrix is $\rank \bm{\Jac} = r =\rank \bm{\Jac}[(3,2,1,4),(3,1,4,5)]= 4$. Thus, the constraint is $\bm{F}^{(\bm{c}-1)}=\{{x}_{1}^{2}+{x}_{2}^{2}\cdot\sin({x}_{3})^{2}-1=0\}$.

From Section {\ref{ssec:exam1}}, by the embedding method, let $\bm{s}=\{\dot{x}_{3}, \dot{x}_{1}, \dot{x}_{4}, \dot{x}_{5}\}$, $\bm{y}=\{\dot{x}_{2}\}$, $\bm{f(s,y,z)}=\{F_{3}, F_2, F_1, F_4\}$ and $\bm{g(s,y,z)}=\{ F_5\}$.  Then we need to replace $\bm{s}$  by $\{u_1, u_2, u_3, u_4\}$ and $\bm{y}$ by a constant $\xi \in \R$ in $\hat{\bm{F}}$, respectively. Finally, we can get a modified {\DAE} $\{\bm{F}^{(\bm{c}-1)}, \bm{F}^{\rm aug}\}$, with $\bm{F}^{\rm aug} = \{\bm{f(s,y,z)},\hat{\bm{F}}\}$.


We can construct a new optimal solution $(\bar{\bm{c}},\bar{\bm{d}})$ of Problem (\ref{LPP}) of $\bm{F}^{\rm aug}$ by Lemma {\ref{lem:lifting}} directly, which yields ${\bm{c}}=(\bm{0}_{1\times 4},0,0,\bm{1}_{1\times 3})$ and ${\bm{d}}=(\bm{1}_{1\times 5},1,1,0,0)$ with the same optimal value $\bar {\delta} = \sum{\bar{d}_{j}}-\sum{\bar{c}_{i}}-\#eqns(\bm{F}^{(\bm{c}-1)})=9-5-1=\delta-n+r=4-5+4$. 


Unfortunately, the Jacobian matrix of the new top block $ \bm{F}^{\rm aug} $ is still singular, with $\rank \bm{\Jac}(\bm{F}^{\rm aug}) =\rank \bm{\Jac}[(1:6,8:9),(1,3:9)] = 8$. Similarly, we need another modification of $\bm{F}^{\rm aug}$ by the embedding method. Finally, this {\DAE} system has been regularized.  The final optimal value is $2 = \bar{\delta}-9+8$.

Compared with the one additional equation of the augmentation method, the embedding method in this example will introduce more equations which will affect the efficiency of the numerical solution, although both methods can be successful after two steps of regularization. However, this adverse effect only exists when the Jacobian matrix is very close to being full rank, {\ie} $r=n-1$.


\subsection{Analysis of Bending Deformation of a Beam}
The specific description is given in Example \ref{ex:3}. In this example, when the input energies of the beam are the same, we can set $\lambda = 1$. Specifically, in order to to check the correctness of our numerical solution of the  global structural analysis method, we set the measured sum of loads $(q_1(x)+q_2(x))=-\frac{EI}{5\cdot x^2}\cdot((1-\sin(x))+y_{1}(x))$. Thus, two exact solutions of this {\DAE} are
 \begin{eqnarray*}
  y_{1}(x) &=& +y_{2}(x)=C_1\cdot\sin(\frac{\sqrt{2}x}{2})+C_2\cdot\cos(\frac{\sqrt{2}x}{2})-\frac{1}{5}\cdot\sin(x)-\frac{1}{5} \\
   y_{1}(x)&=& -y_{2}(x)=-\frac{1}{5}\cdot(1-\sin(x))
 \end{eqnarray*}
 Here $C_1$ and $C_2$ are constants depending on consistent initial conditions.

By structural analysis, the optimal solutions is $\bm{c}=(0,2)$ and $\bm{d}=(2,2)$.

 This non-linear {\DAE} has two components of constraints resulting from its constraints: one component of constraints results from $y_{1}=y_{2}$, the other component of constraints results from $y_{1}=-y_{2}$. In detail, two witness points
 are computed by the Homotopy continuation method \cite{WWX2017} where each point has
 coordinates $(y_{1}, y_{2}, \dot{y}_{1},\dot{y}_{2})$:
\[
\begin{array}{rrrrrr}
  ( & -0.43092053722 &  -0.43092060160 &  -0.27565041470 & -0.27565030340 & ) \\
  ( & -0.19993949748 &  +0.19993723792 &  +0.64332968577 & -0.64333747822 & ) \\
\end{array}
\]
 Because the Jacobian matrix of the polynomial constraints is singular here, a large penalty factor should be introduced in order to improve convergence.  These witness points are approximate points near the consistent initial value points, which and need to be refined by Newton iteration.


 Obviously, the Jacobian matrix is non-singular for any witness point from the component of constraints with $y_{1}=y_{2}$. This case can be solved directly after applying the $\Sigma$-method. On the contrary, for any witness point on the component of constraints with $y_{1}=-y_{2}$, the Jacobian matrix will degenerate to a singular matrix. For this case, we have to construct its equivalent {\DAE}.


\subsection{Result Analysis}
We use Matlab R$2021$a for the numerical computations with the error settings AbsTol =$10^{-6}$ and RelTol = $10^{-3}$. And MATLAB's ode$15$i is used as the ode solver of algorithm for numerical solution of the examples in our paper. The numerical experiment results by the embedding method are shown in Figure {\ref{fig:ex}}, which illustrates the correctness of our methods proposed in this paper.

\begin{figure}[htbp]
\centering
\subfigure[Transistor
Amplifier]{
\includegraphics[width=0.40\textwidth,height=0.25\textwidth]{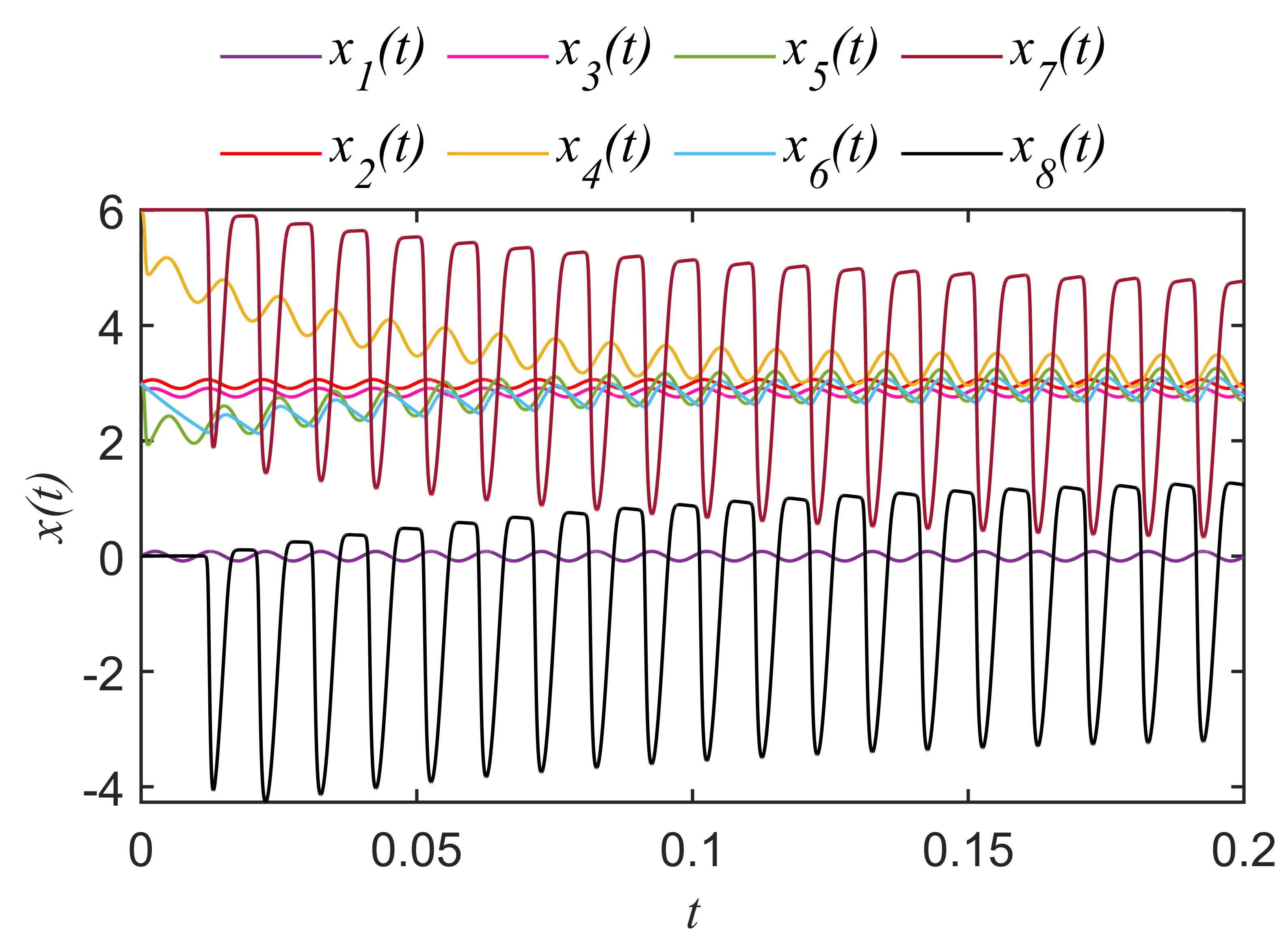}
}
\quad
\subfigure[Modified
Pendulum]{
\includegraphics[width=0.40\textwidth,height=0.25\textwidth]{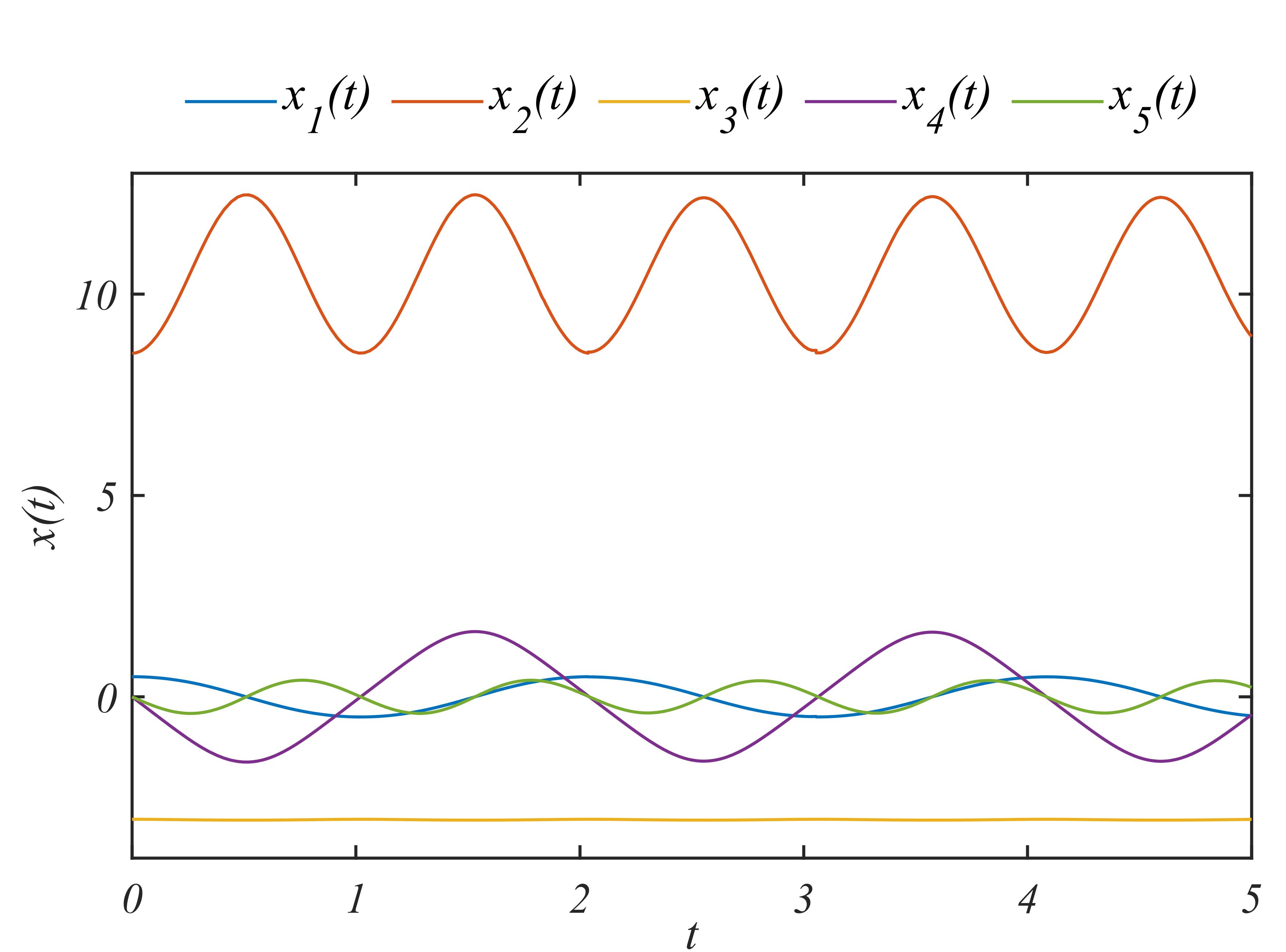}
}
\quad
\subfigure[Beam (non-singular)]{
\includegraphics[width=0.40\textwidth,height=0.25\textwidth]{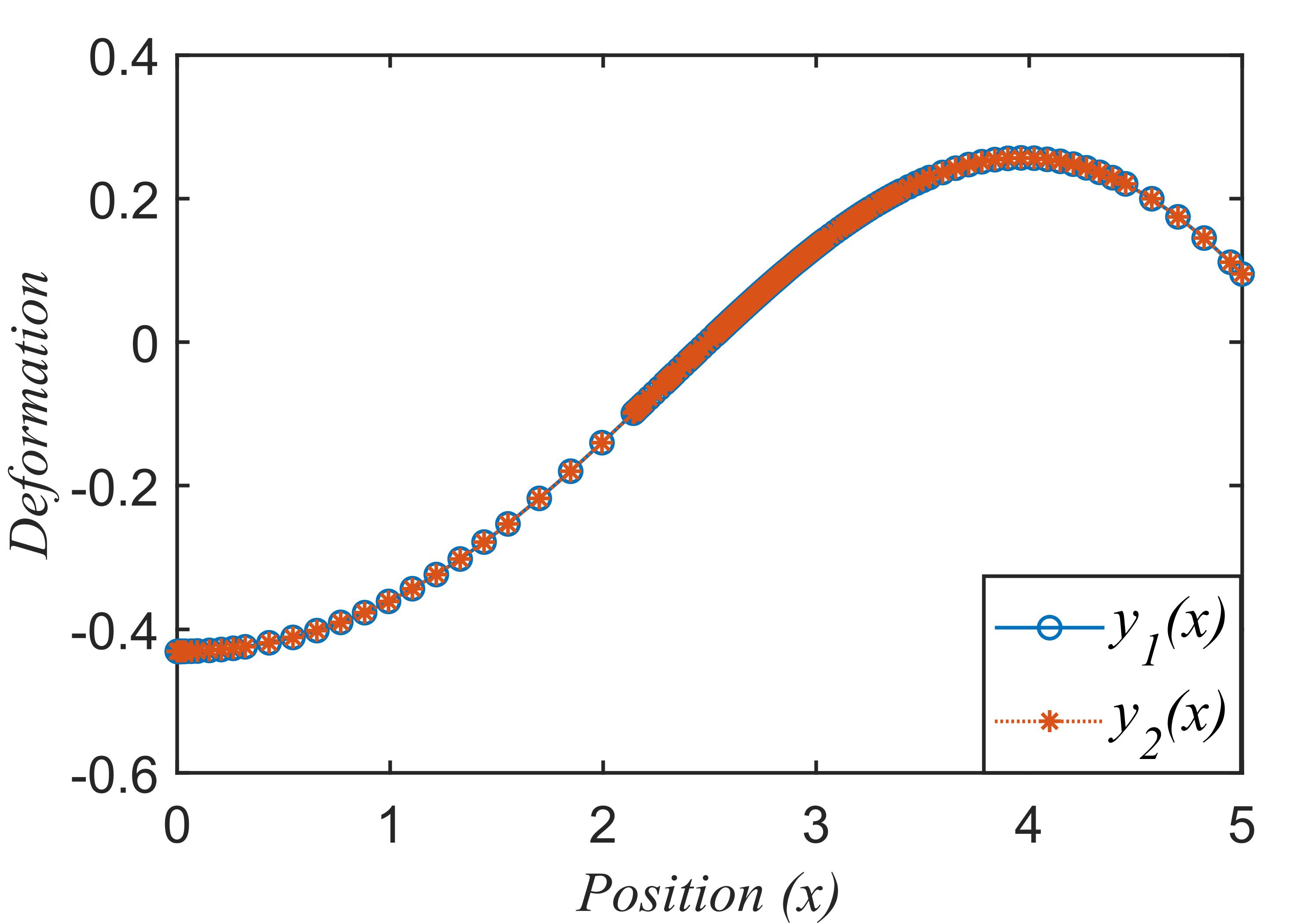}
}
\quad
\subfigure[Beam (singular)]{
\includegraphics[width=0.40\textwidth,height=0.25\textwidth]{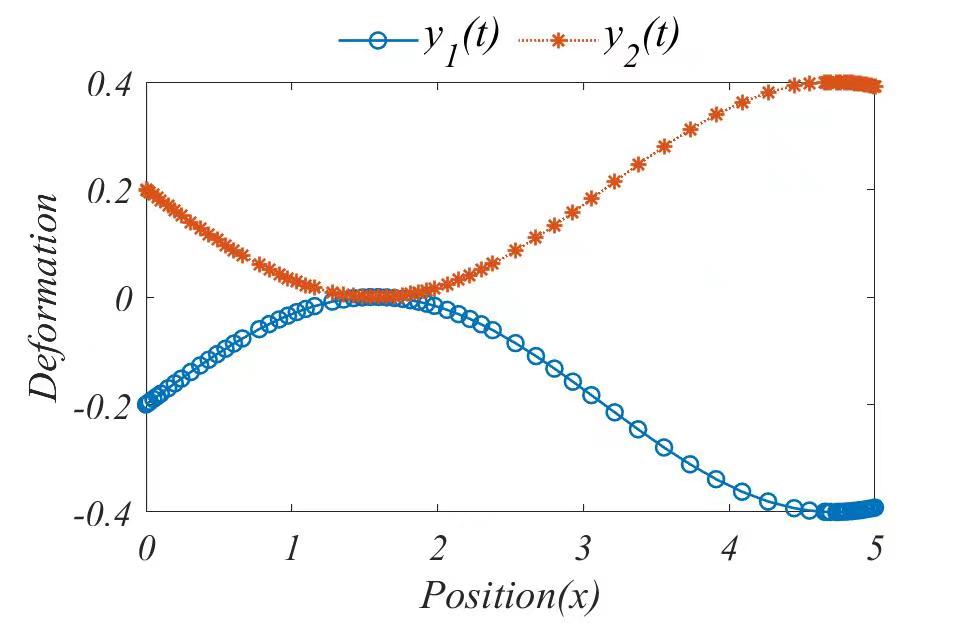}
}
\caption{Numerical Solution of $3$ Examples by the Embedding Method}\label{fig:ex}
\end{figure}

The experimental results are summarized in Table \ref{tab:result}, which show that the substitution method and the augmentation method are effective in dealing with symbolic cancellation {\DAE}s. Without regularization, the $\Sigma$-method only works well on S-amenable cases.
The former three methods all fail in Example \ref{ex:3}, as they cannot detect that the Jacobian matrix has been constrained to be singular. Furthermore, the Homotopy continuation method used in the embedding method helps to detect numerical degeneration by computing a constant rank of Jacobian matrix at witness points. The global structural analysis method based on the embedding method can deal with symbolic cancellation {\DAE}s and numerically degeneration {\DAE}s.

\begin{table}[htpb]
	\caption{Comparison of Experimental Results.
Index = the index reported by the $\Sigma$-method initially, \hskip2pt\#C = \#Components of Constraints, \hskip2pt $\surd$ = success, \hskip2pt $\times$ = failure, \hskip2pt * = possible failure depending on Jacobian matrix, \hskip2pt Sub = Substitution \cite{Taihei19}, \hskip2pt Aug = Augmentation \cite{Taihei19}, \hskip2pt Em = Embedding, \hskip2pt DE = Degeneration, \hskip2pt SC = symbolic cancellation \hskip2pt ND = numerical degeneration. }\label{tab:result}
\centering
\begin{tabular}{|c|c|c|p{50pt}|c|c|c|c|p{20pt}|}
  \hline
  Index & Structure & \#C & Examples & $\Sigma$ \cite{Nedialkov2008} & Sub & Aug & Em & DE\\
  \hline
  1 & linear& $1$ & Transistor Amplifier &$\times$ & $\surd$ &   $\surd$ &  $\surd$ & SC\\
   \hline
  3 & non-linear& $1$ &Modified  Pendulum & $\times$& $\surd$ &  $\surd$ &   $\surd$& SC\\
   \hline
  2 & non-linear& $2$ &Beam & * & * & * &  $\surd$ & ND\\
  \hline
\end{tabular}
\end{table}


\section{Conclusions}\label{sec:con}

We first generalized the definition of optimal value to {\DAE}s with block structure in Section \ref{s:pre}.
In Section \ref{sec:implictit method}, we proposed an improved structural method --- the embedding method ---  based on degeneration detection described in Section \ref{sec:det}. Then we presented a global structural analysis method in Section \ref{sec:poly}. We illustrated our approach by $3$ examples in Section \ref{sec:ex}.

By introducing new variables and equations to increase the dimension of the space in which a {\DAE} resides, the embedding method avoids the direct elimination of the {\DAE}. Under certain conditions, it avoids solving
assignment problems for the new systems. The embedding method is an extension of the augmentation method. It is efficient and intuitive, and enables the simultaneous regularization of the whole {\DAE}, rather than one specific equation at a time. The more rank deficiency encountered, the higher our method's efficiency, but the equation's scale will also increase. Unlike other regularization methods only work in the neighborhood of a consistent point, an S-unamenable {\DAE} can be regularized on a whole component of constraints (except the singular set in Remark \ref{rem:constant_rank}) based on Lemma \ref{lem:whole}. 

 A strong feature of the global structural analysis method, is that Homotopy continuation methods can be naturally and efficiently combined with the embedding method, which can help to deal with almost all degeneration issues for polynomial {\DAE}s. However, existing methods are difficult to obtain consistent points on each component of constraints for {\DAE}s with transcendental equations or strong non-linearity in applications. In this paper, we have established the theoretical foundation of the global structural analysis method for both polynomial systems and analytic systems, but finding all solutions of an analytic system under certain conditions is a worthwhile research topic in the future work.
 

\medskip\noindent{\bf \normalsize Acknowledgements.}
This work is partially supported by the National Key Research Project of China under Grant No.2023YFA1009402, the projects of Chongqing (2021000263, cstc2021yszx-jcyjX0004, cstc2021jcyj-msxmX0821) and the National Natural Science Foundation of China (12301650). 

\begin{appendices}
\section{Algorithms}\label{sec:alg}
\sloppy{}

The global structural analysis method (Algorithm \ref{alg:7}) for solving a polynomial
{\DAE}, is based on the embedding method (Algorithm \ref{alg:6}) --- the key algorithm to restore the regularity. Also, we need to recall some existing subroutines given in Algorithms $1-5$.

Algorithm $1$ is used to find an optimal solution $(\bm{c},\bm{d})$ of Problem (\ref{LPP}) of a {\DAE} ${\bm F}$ with variables $\bm{x}$, which helps to differentiate {\DAE} in a special pattern to reduce its differential index.

Algorithm $2$ is used to find a real witness set $\bm{W}=\{\bm{p}_{i}|i=1,...,m\}$ by the homotopy continuation method. Here, the input $\bm{f}$ is considered as a polynomial system by taking all derivatives of $\bm x$ as new variables. For constraints of a {\DAE}, the obtained real witness points can be considered as candidate initial points. Crucially, this algorithm can find all components of constraints of a {\DAE}.


Algorithm $3$ is a sorting method to find a sub-matrix with constant rank
by swapping the equations of the top block $\bm{B}_{k_{c}}$ and the highest derivative variables $\bm{X}^{k_d}$.
The output is a new $\bm{B}_{k_{c}}$ whose Jacobian matrix at a given real witness point $\bm{p}$ has a full rank sub-matrix $\bm{\Jac}(\bm{p})[1\:r,1\:r]$, where $r$ is determined by Singular Value Decomposition (SVD) \cite{Golub13}. Firstly, calculate permutation vectors of rows and columns for $\bm{\Jac}(\bm{p})$ respectively by Householder QR (HQR). Then, swap equations and variables according to permutation vectors respectively. Before returning the sorted matrix, we will verify the rank of $\bm{\Jac}(\bm{p})[1\:r,1\:r]$ by SVD.

Algorithm $4$ is a low index {\DAE} solver implemented by one-step projection and one-step prediction. Obviously a low index {\DAE} $\bm{F}^{(\bm{c})}$ can be divided into two parts --- constraints $\bm{F}^{(\bm{c-1})}$ and a square {\ODE} $\bm{B}_{k_{c}}$. Firstly, since an initial value may be not a consistent initial value of the {\ODE}, the initial value point needs to  be projected back onto the constraints by Newton iteration to find a nearby consistent initial value point satisfying the constraints. Secondly, an {\ODE} solver,
such as the Runge-Kutta method or the Euler method, is used to make a one-step prediction from the previous consistent initial value point. Through step-by-step iteration, the {\DAE} can be solved numerically, where the tolerance can be set as needed.

\begin{breakablealgorithm}
\caption{}
 \begin{algorithmic}[1]
  \State  $(\bm{c},\bm{d})=Structure(\bm{F}, \bm{x})$, such as the $\Sigma$-method see \cite{Nedialkov2008}.
   \end{algorithmic}
\end{breakablealgorithm}

\vspace{0.2cm}

\begin{breakablealgorithm}
\caption{}
 \begin{algorithmic}[1]
  \State  $\bm{W}=\{\bm{p}_{i}|i=1,...,m\}=witness(\bm{f})$, see \cite{WRF17}. // $m$ is the number of real witness points.
   \end{algorithmic}
\end{breakablealgorithm}

\vspace{0.2cm}


\vspace{0.2cm}

\begin{breakablealgorithm}
\caption{}\label{alg:4}
 \begin{algorithmic}[1]
 \Require the top block equations $\bm{B}_{k_{c}}$, Jacobian matrix $\bm{\Jac}$ with witness point $\bm{p}$, the constant rank $r$,  absolute tolerance $AbsTol$
 \Ensure  recombination of the top block equations $\bm{B}_{k_{c}}$
 \Function {SORT}{$\bm{B}_{k_{c}}, \bm{\Jac}(\bm{p}), r, AbsTol$}
  \State $\bm{piv_{row}}=HQR(\bm{\Jac}(\bm{p}),AbsTol)$, see Section $5.2$  \cite{Golub13};
  \State $\bm{piv_{col}}=HQR(\bm{\Jac}^{T}(\bm{p}),AbsTol)$; // $\bm{piv_{row}}$ and $\bm{piv_{col}}$ are the permutation vector of rows and columns, respectively;
  \State $\bm{B}_{k_{c}}=\bm{B}_{k_{c}}[\bm{piv_{row}}]$, // swap equations;
  \State $\bm{B}_{k_{c}}=\bm{B}_{k_{c}}(\bm{X}_{k_d}[\bm{piv_{col}}])$, // swap the highest derivative variables;
  \State verify the rank of $\bm{\Jac}(\bm{p})[1\:r,1\:r]$ by SVD.
   \EndFunction
   \end{algorithmic}
\end{breakablealgorithm}

\vspace{0.2cm}

\begin{breakablealgorithm}
\caption{}
 \begin{algorithmic}[1]
 \Require low index {\DAE} equations $\bm{F}^{(\bm{c})}$ and dependent variables $\bm{x}$ with independent variable $t\in [t_0,t_{end}]$, initial point $\bm{p}$,  absolute tolerance $AbsTol$ and relative tolerance $RelTol$
 \Ensure  numerical solutions of {\DAE} $\bm{x}(t)$
 \Function {{DAESOLVER}}{$\bm{F}^{(\bm{c})}, \bm{x}, \bm{p}, [t_0, t_{end}], AbsTol, RelTol$} //
  \State  $j=0$, $\bm{x}(t_{0})=\bm{p}$, set step $h$ and the maximum number of iterations $N$;
  \While {$t_{j}<=t_{end}$}
   \State $\bm{x}(t_{j}) = Newton(\bm{F}^{(\bm{c-1})},\bm{x}(t_{j}), AbsTol, N)$ // Refinement, see \cite{Mathews2004};
  \State $\bm{x}(t_{j+1}) = OdeSolver(\bm{B}_{k_c}, \bm{x}(t_{j}), AbsTol, RelTol, h)$ // such as $ode45$, Euler method, $ode15i$ {\etc};
  \State $j=j+1$;
   \EndWhile
   \EndFunction
   \end{algorithmic}
\end{breakablealgorithm}

\vspace{0.2cm}

\renewcommand{\thealgorithm}{5}\label{alg:alg1}
\begin{breakablealgorithm}
\caption{The Embedding Method}\label{alg:6}
 \begin{algorithmic}[1]
    \Require {\DAE} equations $\bm{F}$ and dependent variables $\bm{x}$ with independent variable $t\in [t_0,t_{end}]$, real witness point $\bm{p}$,  absolute tolerance $AbsTol$
    \Ensure  modified {\DAE} new equations $\bm{F}^{(\bm{c})}$ and new real witness point $\bm{p}$

    \Function {Embedding}{$\bm{F},\bm{x},\bm{p}_{i}, AbsTol$}
    \While {true}
    \State Structural Analysis: $(\bm{c},\bm{d})=Structure(\bm{F},\bm{x})$
    \State $n=length(\bm{c})$, $k_d = \max(d_j)$, $k_c = \max c_i$, $\delta=\sum d_j - \sum c_i$ \label{alg1goto}
    \State Construct: $\bm{F}^{(\bm{c})}$  , $\bm{B}_{k_{c}}$,  $\bm{\Jac}$ by Equations (\ref{eq:DefFc},\ref{eq:B_i})
    \State Compute the rank of $\bm{\Jac}(\bm{p})$ by SVD \cite{Golub13}
    \If{$r=n$}
    \State \Return  $\bm{F}^{(\bm{c})}$, $\bm{p}$
    \ElsIf {$\delta-(n-r) \leq 0$}
    \State \Return Error // this {\DAE} does not have a solution.
    \EndIf
    \State $\{\bm{f(s,y,z)},\bm{g(s,y,z)}\}=$SORT$(\bm{B}_{k_{c}}, \bm{\Jac}(\bm{p}), r, AbsTol)$
    \State Introduce $n$ new equations $\hat{\bm{F}}=\{\bm{f(s,y,z)},\bm{g(s,y,z)}\}$
    \State Replace $\bm{s}$  by $\bm{u}$ in $\hat{\bm{F}}$ // refer to Definition {\ref{define_IRE}}
     \State Replace $\bm{y}$ by constants $\bm{\xi} \in \R^{n-r}$ in $\hat{\bm{F}}$
     \State Substitute $\{t_0, \bm{p}, \bm{\xi}\}$ into $\bm{f(s,y,z)}$ to calculate $\bm{s}$, note as $\hat{\bm{u}}$
    \State  $\bm{p}\leftarrow(\bm{p}, \hat{\bm{u}})$ //  corresponding lifting
of consistent initial value
     \State $\bm{F}^{\rm aug} = \{\bm{f(s,y,z)},\hat{\bm{F}}\}$
     \State $\bm{F} \leftarrow \{ \bm{F}^{(\bm{c}-1)}, \bm{F}^{\rm aug}\}$, $\bm{x} \leftarrow (\bm{x},\bm{u})$ //extend equations and variables
     \If {$\bm{F}^{\rm aug}$ satisfies Lemma {\ref{lem:lifting}}}
     \State $\bar{\bm{c}}= [\bm{0}_{r}, \bm{1}_{n}]$, $\bar{\bm{d}}= [\bm{d},\bm{1}_{r}]$ by Equation (\ref{opt_soln})
     \State  $\bm{c}=[\bm{0}_{(\sum c_{j})},\bar{\bm{c}}]$, $\bm{d}=\bar{\bm{d}}$
      \State Goto \ref{alg1goto}
     \EndIf
    \EndWhile
    \EndFunction
 \end{algorithmic}
\end{breakablealgorithm}

\renewcommand{\thealgorithm}{6}\label{alg:alg2}
\begin{breakablealgorithm}
\caption{Global Structural Analysis Method}\label{alg:7}
 \begin{algorithmic}[1]
    \Require {\DAE} equations $\bm{F}$ and dependent variables $\bm{x}$ with independent variable $t\in [t_0,t_{end}]$,  absolute tolerance $AbsTol$ and relative tolerance $RelTol$
    \Ensure numerical solutions of {\DAE} $\bm{x}^{*}(t)$
    \State Initialization:  check the number of equations \#eqns and dependent variables \#dvars
    \If {\#eqns $\neq$ \#dvars} \State \Return False \EndIf
    \State Set $\bm{x}^{*}(t)=\{\}$
    \State Structural Analysis: $(\bm{c},\bm{d})=Structure(\bm{F},\bm{x})$
    \State Construct: differentiated system $\bm{F}^{(\bm{c})}$,  Jacobian matrix $\bm{\Jac}$
    \State Find real witness points: $ \bm{W}=\{ \bm{p}_{i}=witness(\bm{F}^{(\bm{c-1})}(t_0))|i=1,...,m\}$  //$m$ is number of real witness points.
    \For {$\bm{p}_i\in \bm{P}$}
    \State $\{\tilde{\bm{F}},\tilde{\bm{p}}_{i}\}:=Embedding(\bm{F},\bm{x},\bm{p}_{i}, AbsTol)$
    \State $\tilde{\bm{x}}(t)=$DAESOLVER$(\tilde{\bm{F}},\bm{x},\tilde{\bm{p}}_{i}, [t_0, t_{end}], AbsTol, RelTol)$
    \State $\bm{x}^{*}(t)=\{\bm{x}^{*}(t), \tilde{\bm{x}}(t)[1,...,n]\}$
    \EndFor
    \State \Return $\bm{x}^{*}(t)$
 \end{algorithmic}
\end{breakablealgorithm}
\end{appendices}

\bibliography{sn-bibliography}
\end{document}